\crefname{hypothesis}{Hypothesis}{Hypotheses}
\title{Computational unique continuation with finite dimensional Neumann trace
\thanks{Submitted to the editors 23 Feb 2024.
\funding{E.B. was supported by the EPSRC grants EP/T033126/1 and EP/V050400/1. For the purpose of open access, the author has applied a Creative Commons Attribution (CC BY) licence to any Author Accepted Manuscript version arising.
L.O. and Z.Z. were supported by the European Research Council of the European Union, grant 101086697 (LoCal), and the Reseach Council of Finland, grants 347715, 353096 and 359182. Z.Z was supported by the Finnish Ministry of Education and Culture’s Pilot for Doctoral Programmes (Pilot project Mathematics of Sensing, Imaging and Modelling). Views and opinions expressed are those of the authors only and do not necessarily reflect those of the European Union or the other funding organizations.}
}}
\author{Erik Burman\thanks{Department of Mathematics, University College London, 807b Gower Street, London, WC1E 6BT, United Kingdom (\email{e.burman@ucl.ac.uk}).}
\and Lauri Oksanen\thanks{Department of Mathematics and Statistics, University of Helsinki, P.O 68, 00014, Helsinki, Finland (\email{lauri.oksanen@helsinki.fi}, \email{ziyao.zhao@helsinki.fi}).}
\and Ziyao Zhao\footnotemark[3]}
\def\p{\partial}
\def\R{\mathbb R}
\def\O{\varTheta}
\def\x{\mathbf x}
\def\T{\mathcal T}
\def\ta{\tilde{a}}
\def\tu{\tilde{u}}
\def\tz{\tilde{z}}
\def\tg{\tilde{g}}
\def\bb{\boldsymbol{b}}
\DeclareMathOperator{\supp}{supp}
\DeclarePairedDelimiter{\norm}{\lVert}{\rVert}
\DeclarePairedDelimiter{\abs}{\lvert}{\rvert}
\DeclarePairedDelimiter{\trinorm}{\interleave}{\interleave}
\newcommand*{\addFileDependency}[1]{% argument=file name and extension
  \typeout{(#1)}% latexmk will find this if $recorder=0 (however, in that case, it will ignore #1 if it is a .aux or .pdf file etc and it exists! if it doesn't exist, it will appear in the list of dependents regardless)
  \@addtofilelist{#1}% if you want it to appear in \listfiles, not really necessary and latexmk doesn't use this
  \IfFileExists{#1}{}{\typeout{No file #1.}}% latexmk will find this message if #1 doesn't exist (yet)
}
\begin{document}
\nolinenumbers
\maketitle

% REQUIRED
\begin{abstract}
  We consider finite element approximations of unique continuation problems subject to elliptic equations in the case where the normal derivative of the exact solution is known to reside in some finite dimensional space. To give quantitative error estimates we prove Lipschitz stability of the unique continuation problem in the global $H^1$-norm. This stability is then leveraged to derive optimal a posteriori and a priori error estimates for a primal-dual stabilized finite method.
\end{abstract}

\begin{keywords}
unique continuation, conditional stability, finite dimension, Neumann boundary, finite element methods, stabilised methods, error estimates
\end{keywords}

\section{Introduction}
In this work we are interested in the approximation of a unique continuation problem subject to the Poisson equation. This means that the we look for the solution to the equation,
\begin{equation}\label{eq:Poisson}
-\Delta u = f
\end{equation}
in $\Omega \subset \mathbb{R}^d$, and
for some $f \in L^2(\Omega)$, when the boundary condition is unavailable on the boundary, or part of the boundary. In its stead some measured data is available. Typically both Dirichlet and Neumann data are known on some part of the boundary (the elliptic Cauchy problem) or some measurement in the bulk. Both these situations can be handled using the arguments below, but for conciseness we will here concentrate on the second case. Therefore we assume that for some $\omega \subset \Omega$ there is $q:\omega \to \mathbb{R}$ such that $q$ is the restriction to $\omega$ of a solution to \eqref{eq:Poisson} and that we know $q$ up to a quantifiable perturbation $\delta q$. The objective is then to reconstruct $u$ using the equation \eqref{eq:Poisson} and the a priori knowledge $u\vert_\omega = q$.

Unique continuation is an important model problem for many applications in control, data assimilation or inverse problems. It is an ill-posed problem, so the assumption that the data $q$ is associated to a solution is crucial for the solvability of the problem. It is however well known that if this is the case a unique solution exists and satisfies a conditional stability estimate \cite{Giovanni_2009}. If $B \subset \subset \Omega$, that is the set $B$ does not intersect the boundary of $\Omega$ then there holds for all $u \in H^1(\Omega)$,
\[
\|u\|_{L^2(B)} \leq C (\|u\|_{L^2(\Omega)} + \|\Delta u\|_{H^{-1}(\Omega)})^{(1-\alpha)}(\|u\|_{L^2(\omega)} + \|\Delta u\|_{H^{-1}(\Omega)})^\alpha 
\]
where $\alpha \in (0,1)$. The coefficient $\alpha$ depends on the geometry of the domains $\Omega$, $\omega$ and $B$. As $\mbox{dist}(\partial B, \partial \Omega) \rightarrow 0$, $\alpha \rightarrow 0$. In case $B$ and $\Omega$ coincide the stability degenerates to logarithmic
\begin{equation}\label{eq:glob_stab}
    \|u\|_{L^2(\Omega)} \leq C \|u\|_{H^1(\Omega)} \log\left(\frac{\|u\|_{H^1(\Omega)}}{(\|u\|_{L^2(\omega)} + \|\Delta u\|_{H^{-1}(\Omega)})}\right)^{-\beta}
\end{equation}
with $\beta \in (0,1)$. In a series of works \cite{Bu16,BLO,BDE21} various finite element methods (FEM) have been designed and shown to satisfy bounds of the type
\begin{equation}\label{eq:loc_bound}
\|u - u_h\|_{L^2(B)} \leq C h^{\alpha k} (|u|_{H^{k+1}(\Omega)} + h^{-k} \|\delta q \|_{L^2(\omega)}),
\end{equation}
or if $B = \Omega$ and $\delta q =0$
\begin{equation}\label{eq:glob_bound}
\|u - u_h\|_{L^2(\Omega)} \leq C |u|_{H^{k+1}(\Omega)}  \log\left(C h^{-k} \right)^{-\beta}.
\end{equation}
In the recent contribution \cite{burman2023optimal}, the error bound on the form \eqref{eq:loc_bound} was shown to be optimal. It can not be improved for general  solutions and perturbations, regardless of the method. For moderately perturbed data and favourable subdomains $\omega$ and $B$ this leads to sufficient accuracy, in some cases comparable to that of a well-posed problem. On the other hand, if the solution, or its normal derivative, is required on the boundary of the domain the above estimates are very poor. Indeed, there seems to be no results on how to approximate boundary traces accurately in unique continuation problems. In view of the result in \cite{burman2023optimal}, the only way to improve on the bounds is to have additional a priori knowledge. In the work \cite{burman2023finite} it was shown that if the Dirichlet boundary trace is close to some known finite dimensional space then a FEM can be designed so that \eqref{eq:loc_bound} holds with $\alpha = 1$, with an additional perturbation term measuring the distance of the true solution to the finite dimensional space. The assumption that the trace is close to a finite dimensional space holds in a variety of situations, for instance whenever it is a smooth perturbation of a constant, in optimal control with finite dimensional boundary control, or in engineering applications where strong modelling a priori knowledge is at hand, for example classes of admissible boundary profiles.

In the present work we consider the extension of these results to the case when the Neumann condition is in a finite dimensional space. The main result is Corollary \ref{optimal_rate}, which gives optimal convergence rate for the finite element solution. Contrary to \cite{burman2023finite} we prove the stability underpinning the numerical analysis without resorting to the global stability \eqref{eq:glob_stab} (see Section \ref{sec:Lip_stab}). This makes the present analysis self contained. Although the proposed finite element method introduced in Section \ref{sec:FEM} is similar to that of \cite{burman2023finite}, the analysis differs in the Neumann case. Indeed the poorer regularity of the trace variable and the different functional analytical framework lead to some difficulties in the numerical analysis, that are handled in Section \ref{sec:FEM_error}, resulting in optimal a posterori and a priori error estimates. 
\subsection{Relation to previous work}
Early work on computational unique continuation (UC) focused on rewriting the problem as a boundary integral \cite{CM79, IYH91}, while the earliest finite element reference appears to be \cite{FM86}. The dominating regularization techniques are Tikhonov regularization \cite{TA77} and quasi reversibility \cite{LL69}. The literature on computational methods for the discretization of the regularized problem is very rich, see \cite{IJ15} and references therein. For references relevant for the present context we refer to \cite{RHD99,Bou05, DHH13,BR18, BC20,laurent2022}. Iteration techniques using boundary integral formulations have been proposed in \cite{Joh04} and for methods using tools from optimal control we refer to \cite{KK95}.

The first weakly consistent methods with regularisation on the discrete level were introduced in \cite{Bu13}, with the first analysis for ill-posed problems in \cite{Bu14} and then developed further in \cite{Bu16,BLO, BO18, BNO19, BDE21,belgacem2022}. This approach is related to previous work on finite element methods for indefinite problems based on least squares minimization in $H^{-1}$ \cite{BLP97,BLP98}. More recent results using least squares minimization in dual norm for ill-posed problems can be found in \cite{nicolae2015,CIY22, dahmen2022squares}. 
Quantitative a priori error estimates have been derived in a number of situations with careful analysis of the effect of the physical parameters of the problem on the constants of the error estimates \cite{BNO19,BNO20,BNO22}. This has lead to a deeper understanding of the computational difficulty of recovering quantities via UC in different parameter regimes.

That Lipschitz stability can be recovered for finite dimensional target quantities has been known for some time in the inverse problem community, see for example \cite{AV05, Bour13}. Nevertheless, it appears that the first time this property has been exploited in a computational method, leading to optimal error estimates in \cite{burman2023finite}.

\section{Problem setting} %\HOX{I am a bit confused by the assumption that the functions in $\mathcal{V}_N$ have mean value zero. We know that the Neumann trace has finite dimension, but we do not necessarily know that the problem is the solution to a Neumann problem. This is an additional assumption that is introduced in this fashion. Shouldn't we rather assume that $\int_{\p\Omega} g\ dx= \int_{\Omega} f\ dx$ ? Observe that this condition is imposed by the first Euler-Lagrange equation, which shows that we otherwise need to assume that $\int_\Omega f = 0$, which appears a bit artificial. This might complicate things a little bit (I have not checked), but it should work and it would be more appealing.}

Let $\mathcal{V}_N$ be a subspace of $L^2(\p\Omega)$ that satisfies
\begin{enumerate}
  \item For all $g\in \mathcal{V}_N$, there holds $\int_{\p\Omega} g\ dx=0$,
  \item  $\dim(\mathcal{V}_N)=N<\infty$.
\end{enumerate}
We consider the following problems:
    \begin{equation}
    \label{question}
        \begin{cases}
          -\Delta u=f\ \text{in }\Omega,\\
          u=q\ \text{in }\omega,\\
          \partial_{\nu}u|_{\partial \Omega}\in\mathcal{V}_N+\beta,
        \end{cases}
    \end{equation}
      where $\Omega\in\mathbb{R}^d$ is an open, bounded polygonal domain, $\omega\subset\Omega$ is open, and nonempty. $f\in L^2(\Omega)$ and $\beta$ is a constant satisfying $\beta \left|\p\Omega\right| = \int_\Omega f\ dx$. We denote $P$ a projection operator on $\mathcal{V}_N$, with $Q=1-P$.
      
      \section{Lipschitz stability}\label{sec:Lip_stab}
      Firstly, we define a continuous bilinear functional $l(\cdot,\cdot)$ on $H^1(\Omega)\times H^1(\Omega)$
      \begin{equation}
        l(u,v)=(\nabla u,\nabla v)_{L^2(\Omega)},
      \end{equation}
      and for each $u\in H^1(\Omega)$, a continuous linear functional $L_u$ on $H^1(\Omega)$ 
        \begin{equation}
          L_u(v):=l(u,v).
        \end{equation}
      Next, we introduce the space
      \begin{equation}
        H^1_\omega(\Omega):=\left\{u\in H^1(\Omega)\mid \int_\omega u\ dx=0 \right\}.
      \end{equation}
      Then, we have the following lemma.
      \begin{lemma}
        \label{lemma_1}
        For every $u\in H^1_\omega(\Omega)$, there holds
        \begin{equation}
          \norm{u}_{H^1(\Omega)}\lesssim \norm{L_u}_{(H^1(\Omega))^*}.
        \end{equation}
      \end{lemma}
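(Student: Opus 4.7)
The plan is to reduce the statement to two ingredients: a duality test against $v=u$ itself, and a Poincaré-type inequality tailored to the subspace $H^1_\omega(\Omega)$.

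First I would exploit the definition of the dual norm. By taking $v=u$ as a test function and dividing by $\|u\|_{H^1(\Omega)}$, one obtains
\begin{equation*}
\|\nabla u\|_{L^2(\Omega)}^2 = l(u,u) = L_u(u) \le \|L_u\|_{(H^1(\Omega))^*}\,\|u\|_{H^1(\Omega)}.
\end{equation*}
This already controls the $H^1$-seminorm in terms of $\|L_u\|_{(H^1(\Omega))^*}$ and $\|u\|_{H^1(\Omega)}$, so the remaining task is to upgrade the seminorm bound to a full $H^1$-norm bound.

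The second step is a Poincaré-type inequality on $H^1_\omega(\Omega)$, namely
\begin{equation*}
\|u\|_{L^2(\Omega)} \lesssim \|\nabla u\|_{L^2(\Omega)} \qquad \forall u \in H^1_\omega(\Omega).
\end{equation*}
This is the standard Poincaré inequality with mean zero taken over the subdomain $\omega$ instead of the whole of $\Omega$. It follows from a textbook compactness-and-contradiction argument: if the inequality failed, a normalized sequence $(u_n)$ with $\|u_n\|_{L^2(\Omega)}=1$ and $\|\nabla u_n\|_{L^2(\Omega)}\to 0$ would, up to a subsequence, converge in $L^2(\Omega)$ to a constant $c$ by Rellich–Kondrachov; the constraint $\int_\omega u_n\,dx=0$ forces $c=0$, contradicting $\|u_n\|_{L^2(\Omega)}=1$. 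Hence $\|u\|_{H^1(\Omega)} \lesssim \|\nabla u\|_{L^2(\Omega)}$ on $H^1_\omega(\Omega)$.

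Combining the two steps gives
\begin{equation*}
\|u\|_{H^1(\Omega)}^2 \lesssim \|\nabla u\|_{L^2(\Omega)}^2 \le \|L_u\|_{(H^1(\Omega))^*}\,\|u\|_{H^1(\Omega)},
\end{equation*}
and dividing by $\|u\|_{H^1(\Omega)}$ (the case $u=0$ being trivial) yields the claim. The only mildly subtle point is the Poincaré inequality on $H^1_\omega(\Omega)$; since $\omega$ is only assumed open and nonempty this is not automatic, but it is a standard consequence of compact embedding together with the fact that the linear functional $u \mapsto |\omega|^{-1}\int_\omega u\,dx$ is continuous on $H^1(\Omega)$ and equals the $L^2(\Omega)$-mean on constants.
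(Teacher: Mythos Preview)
Your proof is correct and follows essentially the same route as the paper: establish the Poincar\'e-type inequality $\|u\|_{H^1(\Omega)}\lesssim\|\nabla u\|_{L^2(\Omega)}$ on $H^1_\omega(\Omega)$ via a compactness argument, then test $L_u$ against $v=u$ and absorb. The only cosmetic differences are that the paper splits the Poincar\'e step into two (a compactness--uniqueness lemma giving $\|u\|_{H^1}\lesssim\|\nabla u\|_{L^2(\Omega)}+\|u\|_{L^2(\omega)}$, followed by Friedrich's inequality on $\omega$) and concludes with Young's inequality rather than division, but the substance is identical.
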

      \begin{proof}
      \quad\   We start from the inequality
        \begin{equation}
          \norm{u}_{H^1(\Omega)}\lesssim \norm{u}_{L^2(\Omega)}+\norm{\nabla u}_{L^2(\Omega)}+\norm{u}_{L^2(\omega)},
        \end{equation}
        define a continuous linear operator
        \begin{align}
          \label{define_A}
          A:\ H^1(\Omega)&\to \left[ L^2(\Omega)\right]^n\times L^2(\omega),\\
          Au&=(\nabla u,u|_\omega),
        \end{align}
        and denote the natural imbedding from $H^1(\Omega)$ to $L^2(\Omega)$ by $K$.
        To simplify the notation, we denote
        \begin{align}
          X:&= \left[ L^2(\Omega)\right]^n\times L^2(\omega).
        \end{align}
        Then we have
        \begin{equation}
          \Vert u\Vert_{H^1(\Omega)}\lesssim \Vert Au\Vert_{X}+\Vert Ku\Vert_{L^2(\Omega)}.
        \end{equation}
        Notice that $K$ is compact and $A$ is an injection. Indeed, suppose $Au_0=0$ for $u_0\in H^1(\Omega)$, that is, $\nabla u_0=0$ and $u_0|_\omega=0$. Since $\nabla u_0=0$ implies $u_0$ is a constant, $u_0=0$ in $\Omega$ follows immediately from $u_0|_\omega=0$.\par
        Therefore, by compactness-uniqueness \cite[Lemma 9]{Chervova},
        \begin{equation}
        \label{compactness_uniqueness}
          \norm{u}_{H^1(\Omega)}\lesssim \norm{Au}_{X}=\norm{\nabla u}_{L^2(\Omega)}+\norm{u}_{L^2(\omega)}.
        \end{equation}
        According to the Friedrich's inequality \cite[Lemma 4.3.14]{Brenner}, the condition $\int_\omega u\ d\x=0$ implies that
        \begin{equation}
          \norm{u}_{L^2(\omega)}\lesssim \norm{\nabla u}_{L^2(\omega)}\leq \norm{\nabla u}_{L^2(\Omega)}.
        \end{equation}
        Hence 
        \begin{equation}
          \label{Poincare_inequality}
          \norm{u}_{H^1(\Omega)} \lesssim \norm{\nabla u}_{L^2(\Omega)}.
        \end{equation}
        For arbitrary $\epsilon>0$, there holds
        \begin{equation}
          \label{ineq_1}
          \norm{\nabla u}^2_{L^2(\Omega)}=L_u(u)\leq \norm{L_u}_{(H^{1}(\Omega))^*}\norm{u}_{H^1(\Omega)}\leq \epsilon^{-1}\norm{L_u}_{(H^{1}(\Omega))^*}^2+\frac{\epsilon}{4}\norm{u}_{H^1(\Omega)}^2.
        \end{equation}
        Combining \eqref{Poincare_inequality} and \eqref{ineq_1}, and choosing $\epsilon$ small enough, we have
        \begin{equation}
          \norm{u}_{H^1(\Omega)}\lesssim \norm{L_u}_{(H^{1}(\Omega))^*}.
        \end{equation}
      \end{proof}  
      \begin{lemma}
        \label{existence}
        Suppose $F\in (H^1(\Omega))^*$ satisfies
        \begin{equation}
          F(\mathbf{c})=0,
        \end{equation}
        for all constant functions $\mathbf{c}$ on $\Omega$. Then for the variational problem
        \begin{equation}
          \label{eq:l=F}
          l(u,v)=F(v),\ \forall v\in H^1(\Omega)    
        \end{equation}
        there exists a unique solution $u\in H^1_\omega(\Omega)$.
      \end{lemma}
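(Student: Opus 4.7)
The plan is to apply Lax--Milgram on the quotient-like subspace $H^1_\omega(\Omega)$, where the bilinear form $l$ becomes coercive, and then to bootstrap the resulting solution so that the variational identity holds against \emph{all} test functions in $H^1(\Omega)$, exploiting the assumption that $F$ annihilates constants.

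First, I would record the direct-sum decomposition $H^1(\Omega) = H^1_\omega(\Omega) \oplus \mathbb{R}$: any $v\in H^1(\Omega)$ can be written uniquely as $v = v_0 + c_v$ where $c_v = |\omega|^{-1}\int_\omega v\, dx$ is a constant and $v_0 := v - c_v \in H^1_\omega(\Omega)$. Because $\nabla c_v \equiv 0$, the form $l$ is insensitive to the constant component, and by hypothesis on $F$ so is the right-hand side; hence the variational problem \eqref{eq:l=F} is equivalent to
\begin{equation*}
  \text{find } u\in H^1_\omega(\Omega) \text{ such that } l(u,v_0) = F(v_0) \quad \forall v_0\in H^1_\omega(\Omega).
\end{equation*}

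Next I would verify the Lax--Milgram hypotheses on $H^1_\omega(\Omega)$. Continuity of $l$ and of $F|_{H^1_\omega(\Omega)}$ is immediate from the definitions and the $H^1$-continuity of $F$. Coercivity is precisely the content of \eqref{Poincare_inequality} in the proof of Lemma~\ref{lemma_1}: for every $u\in H^1_\omega(\Omega)$,
\begin{equation*}
  l(u,u) = \norm{\nabla u}_{L^2(\Omega)}^2 \gtrsim \norm{u}_{H^1(\Omega)}^2.
\end{equation*}
Lax--Milgram then yields a unique $u\in H^1_\omega(\Omega)$ solving the reduced problem.

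Finally, to recover the statement for general $v\in H^1(\Omega)$, I would test against $v = v_0 + c_v$ and note that $l(u,c_v) = 0$ and $F(c_v)=0$, so $l(u,v) = l(u,v_0) = F(v_0) = F(v)$. Uniqueness within $H^1_\omega(\Omega)$ follows since any two solutions differ by an element of the kernel of $l$, i.e.\ a constant, which must vanish because both lie in $H^1_\omega(\Omega)$. I do not foresee a serious obstacle; the only mildly subtle point is the mismatch between the trial space $H^1_\omega(\Omega)$ and the test space $H^1(\Omega)$, which is resolved cleanly by the decomposition together with the vanishing of $F$ on constants.
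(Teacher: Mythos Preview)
Your proposal is correct and follows essentially the same route as the paper: reduce the test space from $H^1(\Omega)$ to $H^1_\omega(\Omega)$ using that both $l$ and $F$ annihilate constants, invoke the coercivity inequality \eqref{Poincare_inequality} on $H^1_\omega(\Omega)$, and apply Lax--Milgram. Your write-up is in fact somewhat more explicit than the paper's, spelling out the direct-sum decomposition $H^1(\Omega)=H^1_\omega(\Omega)\oplus\mathbb{R}$ and the recovery step for general test functions, but the underlying argument is identical.
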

      \begin{proof}
        Since for any constant function $\mathbf{c}$ on $\Omega$, $F(\mathbf{c})=0$ and $l(u,\mathbf{c})=0$, problem \eqref{eq:l=F} amounts to finding $u\in H^1(\Omega)$ such that
        \begin{equation}
          \label{eq:l=F_1}
          l(u,v)=F(v),\ \forall v\in H^1_{\omega}(\Omega).
        \end{equation}
        The coercivity of $l$ on $H^1_{\omega}(\Omega)$ follows from \eqref{Poincare_inequality}. According to Lax-Milgram theorem \cite[Corollary 5.8]{Brezis}, there exists a unique $u\in H^1_{\omega}(\Omega)$ solves \eqref{eq:l=F}.
      \end{proof}
      For each $u\in H^1(\Omega)$ with $\p_\nu u|_{\p\Omega}\in L^2(\p\Omega)$, we define a linear functional $L^q_u$ on $H^1(\Omega)$
      \begin{equation}
        L^q_u(v):=L_u(v)-(P\p_\nu u,v)_{L^2(\p\Omega)},\ v\in H^1(\Omega).
      \end{equation}
      By trace inequality, we can see that $L^q_u\in (H^1(\Omega))^*$.
      \begin{theorem}
        \label{lipschitz_stability}
        For $u\in H^1(\Omega)$ with $\p_\nu u|_{\p\Omega}\in L^2(\p\Omega)$, there holds
      \begin{equation}
      \label{eq:Lipschitz_stability}
        \norm{P\p_\nu u}_{L^2(\p\Omega)}+\Vert u\Vert_{H^1(\Omega)}\lesssim \Vert u\Vert_{L^2(\omega)}+\norm{L^q_u}_{(H^1(\Omega))^*}.
      \end{equation} 
      \end{theorem}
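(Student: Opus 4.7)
My plan is to argue by contradiction, running a compactness--uniqueness argument in the spirit of the proof of Lemma~\ref{lemma_1}. The three sources of compactness that I will exploit are the Rellich--Kondrachov theorem for $\{u_n\}$, the compactness of the trace map $H^1(\Omega)\to L^2(\p\Omega)$, and the finite-dimensionality of $\mathcal{V}_N$, which compactifies the sequence $\{P\p_\nu u_n\}$ in $L^2(\p\Omega)$. The uniqueness ingredient is the standard unique continuation property for harmonic functions on a connected domain; this is how $\norm{u}_{L^2(\omega)}$ enters.

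Suppose the inequality fails. Then there is a sequence $\{u_n\}\subset H^1(\Omega)$ with $\p_\nu u_n\in L^2(\p\Omega)$, $\norm{P\p_\nu u_n}_{L^2(\p\Omega)}+\norm{u_n}_{H^1(\Omega)}=1$ for every $n$, and $\norm{u_n}_{L^2(\omega)}+\norm{L^q_{u_n}}_{(H^1(\Omega))^*}\to 0$. Passing to subsequences, Rellich--Kondrachov together with the compactness of the trace into $L^2(\p\Omega)$ give $u_n\rightharpoonup u$ in $H^1(\Omega)$ and $u_n\to u$ strongly in $L^2(\Omega)$ and in $L^2(\p\Omega)$ for some $u\in H^1(\Omega)$. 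Since $\{P\p_\nu u_n\}$ is a bounded sequence in the finite-dimensional space $\mathcal{V}_N$, a further subsequence converges in $L^2(\p\Omega)$ to some $g^\ast\in\mathcal{V}_N$.

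Passing to the limit in $L^q_{u_n}(v)=(\nabla u_n,\nabla v)_{L^2(\Omega)}-(P\p_\nu u_n,v)_{L^2(\p\Omega)}$ for each fixed $v\in H^1(\Omega)$ yields $(\nabla u,\nabla v)_{L^2(\Omega)}=(g^\ast,v)_{L^2(\p\Omega)}$ for every such $v$. This identifies $u$ as a harmonic function in $\Omega$ with $\p_\nu u=g^\ast\in L^2(\p\Omega)$; moreover $u\vert_\omega\equiv 0$ from the strong $L^2(\omega)$ convergence. Unique continuation for harmonic functions on the connected domain $\Omega$ then forces $u\equiv 0$, and therefore $g^\ast=0$.

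To close the contradiction I test $L^q_{u_n}$ against $u_n$ itself,
\begin{equation*}
\norm{\nabla u_n}^2_{L^2(\Omega)}=L^q_{u_n}(u_n)+(P\p_\nu u_n,u_n)_{L^2(\p\Omega)}.
\end{equation*}
The first summand is bounded by $\norm{L^q_{u_n}}_{(H^1(\Omega))^*}\norm{u_n}_{H^1(\Omega)}\to 0$, and the second by $\norm{P\p_\nu u_n}_{L^2(\p\Omega)}\norm{u_n}_{L^2(\p\Omega)}\to 0$, since $\norm{P\p_\nu u_n}$ is bounded while $u_n\to 0$ in $L^2(\p\Omega)$. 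Hence $\norm{\nabla u_n}_{L^2(\Omega)}\to 0$; together with $\norm{u_n}_{L^2(\Omega)}\to 0$ and $\norm{P\p_\nu u_n}_{L^2(\p\Omega)}\to 0$, this contradicts the normalization. The delicate point of the argument is precisely the limit identification: because the Neumann traces $P\p_\nu u_n$ a priori only satisfy $L^2(\p\Omega)$ bounds, the finite-dimensionality of $\mathcal{V}_N$ is what upgrades this to strong $L^2(\p\Omega)$ convergence and allows us to conclude that the weak limit $u$ is genuinely harmonic with a bona fide $L^2(\p\Omega)$ Neumann trace. A direct estimate of $\norm{P\p_\nu u}_{L^2(\p\Omega)}$ by $\norm{\nabla u}_{L^2(\Omega)}+\norm{L^q_u}_{(H^1)^*}$ (via the equivalence of norms on $\mathcal{V}_N$) combined with Lemma~\ref{lemma_1} leads to a circular bound whose constants cannot be absorbed, which is why compactness--uniqueness is needed.
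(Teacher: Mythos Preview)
Your proof is correct and takes a genuinely different route from the paper. The paper argues constructively: it first reduces to $u\in H^1_\omega(\Omega)$, then decomposes $u=v+w$ with $v,w\in H^1_\omega(\Omega)$ solving $l(v,\cdot)=L^q_u(\cdot)$ and $l(w,\cdot)=(P\p_\nu u,\cdot)_{L^2(\p\Omega)}$, and applies Lemma~\ref{lemma_1} to each piece. The key step is the introduction of the map $A:\mathcal{V}_N\to L^2(\omega)$, $g\mapsto w_g|_\omega$; unique continuation makes $A$ injective, and finite-dimensionality of $\mathcal{V}_N$ then yields $\norm{P\p_\nu u}_{L^2(\p\Omega)}\lesssim\norm{w}_{L^2(\omega)}\le\norm{u}_{L^2(\omega)}+\norm{v}_{L^2(\omega)}$, after which the estimate assembles via Lemma~\ref{lemma_1}.

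Your compactness--uniqueness argument bypasses both the decomposition and Lemma~\ref{lemma_1} entirely, and treats the zero-mean and non-zero-mean cases uniformly. The three compactness sources you invoke (Rellich, compact trace $H^1(\Omega)\to L^2(\p\Omega)$, and finite-dimensionality of $\mathcal{V}_N$) are exactly right, and the closing step of testing $L^q_{u_n}$ against $u_n$ to upgrade weak to strong $H^1$ convergence is clean. Both proofs consume the same qualitative input---unique continuation for harmonic functions and $\dim\mathcal{V}_N<\infty$---and both yield non-explicit constants. The paper's version has the slight advantage that it isolates \emph{where} the constant lives (in the norm of $A^{-1}$ on the finite-dimensional range), which could in principle be tracked; your version is shorter and avoids the auxiliary Neumann problems.
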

      
      \begin{proof}
        First we assume that $u\in H^1_\omega(\Omega)$.\par
        Write $u=v+w$, where $v,\ w\in H^1_\omega(\Omega)$ and satisfy
        \begin{equation}
          \label{equation_v}
              l(v,\varphi)=L_u^q(\varphi),\ \forall\varphi\in H^1(\Omega),
            \end{equation}
        and
        \begin{equation}
          \label{equation_w}
            l(w,\varphi)=(P\p_\nu u,\varphi)_{L^2(\p\Omega)},\ \forall \varphi\in H^1(\Omega),
        \end{equation}
        respectively. Since $L^q_u(\mathbf{c})=0$ and $(P\p_\nu u, \mathbf{c})_{L^2(\p\Omega)}=0$, \eqref{equation_v} and \eqref{equation_w} are both solvable by Lemma \ref{existence}. We define an operator
        \begin{align}
          A&:\mathcal{V}_N\to L^2(\omega)\\
          A(g)&:= w_g|_\omega,\ g\in \mathcal{V}_N,
        \end{align}
        where $w_g\in H^1_\omega(\Omega)$ satisfies 
        \begin{equation}
          l(w_g,\varphi)=(g,\varphi)_{L^2(\p\Omega)},\ \forall\varphi\in H^1(\Omega).
        \end{equation}
        Notice that $\norm{\Delta w_g}_{H^{-1}(\Omega)}=0$. Then by \eqref{eq:glob_stab}, $A$ is injective.\par
        Since $A(\mathcal{V}_N)$ is a finite dimensional subspace of $L^2(\omega)$, there exist a norm on $A(\mathcal{V}_N)$ such that $A$ is an isometry. As all norms are equivalent in the finite dimensional range of $A$, there holds
        \begin{equation}
            \norm{g}_{L^2(\p\Omega)}\lesssim \norm{A g}_{L^2(\omega)}.
        \end{equation}
        Notice that $A(P\p_\nu u)=w|_\omega$. We conclude that
        \begin{equation}
          \norm{P\p_\nu u}_{L^2(\p\Omega)}\lesssim \norm{w}_{L^2(\omega)}.
        \end{equation}
        Applying the Lemma \ref{lemma_1} to $v$ and $w$, there holds
        \begin{align}
          \label{ineq_2}
          \norm{u}_{H^1(\Omega)}&\leq \norm{v}_{H^1(\Omega)}+\norm{w}_{H^1(\Omega)}\lesssim \norm{v}_{H^1(\Omega)}+\norm{P\p_\nu u}_{L^2(\p\Omega)}\\
          &\lesssim \norm{v}_{H^1(\Omega)}+\norm{w}_{L^2(\omega)}\lesssim \norm{v}_{H^1(\Omega)}+\norm{u}_{L^2(\omega)}\\
          &=\norm{L_u^q}_{(H^1(\Omega))^*}+\norm{u}_{L^2(\omega)}.
        \end{align}
        The above argument also gives
        \begin{equation}
            \norm{P\p_\nu u}_{L^2(\p\Omega)}\lesssim \norm{L_u^q}_{(H^1(\Omega))^*}+\norm{u}_{L^2(\omega)}.
        \end{equation}
        \par
        For $u \not\in H^1_\omega(\Omega)$, there exist a $\tilde{u}\in H^1_\omega(\Omega)$ and a constant $C\in\R$ such that $u=C+\tilde{u}$. Then
        \begin{align}
          \norm{u}_{H^1(\Omega)}^2&\leq2\norm{\tilde{u}}_{H^1(\Omega)}^2+2C^2|\Omega|,\\
          \norm{u}^2_{L^2(\omega)}&= \norm{\tilde{u}}^2_{L^2(\omega)}+C^2|\omega|.
        \end{align}
        Therefore, we have
        \begin{align}
          \norm{u}_{H^1(\Omega)}^2&\leq 2\norm{\tilde{u}}^2_{H^1(\Omega)}+2C^2|\Omega|\lesssim \norm{L_{\tilde{u}}^q}_{(H^1(\Omega))^*}^2+\norm{\tilde{u}}^2_{L^2(\omega)}+C^2|\omega|\\
          &= \norm{L_u^q}_{(H^1(\Omega))^*}^2+\norm{u}_{L^2(\omega)}^2,
        \end{align}
        and
        \begin{align}
            \norm{P\p_\nu u}_{L^2(\p\Omega)}&=\norm{P\p_\nu \tilde{u}}_{L^2(\p\Omega)}\lesssim \norm{\tilde{u}}_{L^2(\omega)}+\norm{L^q_{\tilde{u}}}_{(H^1(\Omega))^*}\\
            &\lesssim \norm{u}_{L^2(\omega)}+\norm{L^q_u}_{(H^1(\Omega))^*}.
        \end{align}
        Combining the above two inequalities yields \eqref{eq:Lipschitz_stability}.
      \end{proof}
      
      \section{Finite element method}\label{sec:FEM}
      Here we will introduce a finite element method for the approximation of \eqref{question}. Some results detailing the continuity, stability and consistency properties of the method will then be proven, preparing the terrain for the error analysis in the next section.
      
      Let $\T_h$ be a decomposition of $\Omega$ into shape regular simplices $K$ that form a simplicial complex and let $h=\max_{K\in\T_h}\text{diam}(K)$ be the global mesh parameter. The trace inequality with scaling \cite[Eq. 10.3.8]{Brenner} reads
      \begin{equation}
        \label{eq:trace_inequality}
        h^{1/2}\norm{u}_{L^2(\p K)}\lesssim \norm{u}_{L^2(K)}+\norm{h\nabla u}_{L^2(K)},\quad u\in H^1(K).
      \end{equation}
      On $\T_h$ we define the standard space of continuous finite element functions
      \begin{equation}
        V_h:=\{v\in H^1(\Omega)\mid \ v|_K\in \mathbb{P}_k \text{ for } K\in \T_h\}.
      \end{equation}
      Here $\mathbb{P}_k$ is the space of polynomial of degree at most $k\geq 1$ on $K$. For $m\geq 0$, we denote the broken semiclassical Sobolev seminorms and norms by
      \begin{equation}
        [u]_{H^m(\T_h)}^2=\sum_{K\in \T_h}\norm{(h D)^m u}_{L^2(K)}^2, \quad \norm{u}_{H^m(\T_h)}^2=\sum_{k=0}^m [u]^2_{H^k(\T_h)}.
      \end{equation} 
      The discrete inequality \cite[Lemma 1.138]{ern} in $\mathbb{P}_k$ implies that for all integers $m\geq l\geq 0$
      \begin{equation}
        \label{eq:inverse}
        [u]_{H^m(\T_h)}\lesssim [u]_{H^l(\T_h)},\quad u\in V_h.
      \end{equation}
      Let $F$ be a interior face between two simplices $K_1,K_2\in \mathcal{T}_h$ such that $K_1\cap K_2=F$, then the jump over $F$ is given by
      \begin{equation}
          \llbracket \nabla u \rrbracket_F = \nu_1\cdot \nabla u|_{K_1} + \nu_2\cdot \nabla u|_{K_2},
      \end{equation}
      where $\nu_1$ and $\nu_2$ are the outward normal of $K_1$ and $K_2$, respectively.
      
      Then we introduce the Lagrangian on $V_h \times V_h$
      \begin{align}
      \label{eq:def_lagrangian}
        \mathcal{L}(u,z)&:=\frac{h^2}{2}\norm{u-q}_{L^2(\omega)}^2+a(u,z)-h^2(f,z)_{L^2(\Omega)}-h^2(\beta,z)_{L^2(\p\Omega)}\\
        &+\frac{1}{2}B(u) - h(h Q \p_\nu u, h\beta)_{L^2(\p\Omega)}+\frac{1}{2}[h^2\Delta u+h^2 f]_{H^0(\T_h)}^2+\frac{1}{2}J(u)-\frac{1}{2}S^*(z),
      \end{align}
      where 
      \begin{align}
        \label{definitions}
        B(u)&=h\norm{h Q\p_\nu u}^2_{L^2(\p\Omega)},\\
        a(u,z)=(h\nabla u&,h\nabla z)_{L^2(\Omega)}-h\int_{\p\Omega}h(P\p_\nu u) z\ dx,\\
        J(u)&=\sum_{K\in \T_h}h\norm{\llbracket h\nabla u\rrbracket}_{L^2(\p K\setminus \p\Omega)}^2,\\
        S^*(z)&=h^2\norm{z}_{H^1(\Omega)}^2.
      \end{align}
      For 
      \begin{align}
        S(u)&=J(u)+[h^2\Delta u]^2_{H^0(\T_h)},
      \end{align}
      we have the following lemma.
      \begin{lemma}
        \label{a_c_estimate}
        For $u\in H^1(\Omega)$ with $\Delta (u|_K)\in L^2(K)$ for all $K\in \T_h$, $\p_\nu u|_{\p\Omega}\in L^2(\p\Omega)$ and $z\in H^1(\Omega)$, there holds
        \begin{align}
          a(u,z)\lesssim (S(u)^{1/2}+B(u)^{1/2})\norm{z}_{H^1(\T_h)}.
        \end{align}
      \end{lemma}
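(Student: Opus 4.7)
My plan is to prove the bound by integrating by parts element-wise and then grouping the resulting terms so each pairs cleanly with one of $[h^2 \Delta u]_{H^0(\mathcal{T}_h)}$, $J(u)^{1/2}$, and $B(u)^{1/2}$.

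First, I would apply Green's identity on each $K\in \mathcal{T}_h$ to write $(h\nabla u, h\nabla z)_{L^2(K)} = -(h^2 \Delta u, z)_{L^2(K)} + \int_{\partial K} h^2 (\nu_K\cdot \nabla u)\, z\, dS$. Summing over elements, the face contributions on interior faces combine into the jumps $\llbracket h\nabla u\rrbracket$ (with an extra factor of $h$), and the contributions on $\partial\Omega$ collapse into $\int_{\partial\Omega} h^2 \partial_\nu u\cdot z\, dS$. Using the decomposition $\partial_\nu u = P\partial_\nu u + Q\partial_\nu u$, the projected piece cancels exactly the term $-h\int_{\partial\Omega} h(P\partial_\nu u)\, z\, dS$ present in the definition of $a(u,z)$, leaving only the $Q\partial_\nu u$ contribution. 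This yields the identity
\begin{equation*}
a(u,z) = -(h^2\Delta u, z)_{L^2(\Omega)} + \sum_{F\text{ interior}}\int_F h^2 \llbracket \nabla u\rrbracket\, z\, dS + \int_{\partial\Omega} h(hQ\partial_\nu u)\, z\, dS.
\end{equation*}

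Next, I would estimate each term by Cauchy--Schwarz. The volume term is immediate: $|(h^2 \Delta u, z)_{L^2(\Omega)}| \leq [h^2\Delta u]_{H^0(\mathcal{T}_h)}\|z\|_{L^2(\Omega)}$, and $\|z\|_{L^2(\Omega)} \leq \|z\|_{H^1(\mathcal{T}_h)}$. For the interior face term, I split the factor $h^2$ as $h^{1/2}\cdot h\cdot h^{1/2}$ and pair $(h^{1/2}h\llbracket\nabla u\rrbracket)$ with $(h^{1/2}z)$; Cauchy--Schwarz on each face and summation give
\begin{equation*}
\Big|\sum_F \int_F h^2 \llbracket \nabla u\rrbracket z\, dS\Big| \leq J(u)^{1/2} \Big(\sum_{K\in\mathcal{T}_h} h\|z\|_{L^2(\partial K\setminus\partial\Omega)}^2\Big)^{1/2},
\end{equation*}
and the scaled trace inequality \eqref{eq:trace_inequality} bounds the second factor by a constant times $\|z\|_{H^1(\mathcal{T}_h)}$. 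The same splitting on the boundary term produces $B(u)^{1/2}\|h^{1/2}z\|_{L^2(\partial\Omega)}$, again controlled through \eqref{eq:trace_inequality} applied on the elements touching $\partial\Omega$. Combining the three bounds and using $J(u)^{1/2}+[h^2\Delta u]_{H^0(\mathcal{T}_h)}\lesssim S(u)^{1/2}$ delivers the claim.

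The only subtlety I anticipate is the boundary cancellation in the integration-by-parts step: one must recognize that the $-h\int_{\partial\Omega} h(P\partial_\nu u)z\, dS$ term built into the definition of $a(u,z)$ is precisely what removes the $P\partial_\nu u$ part of the boundary flux, so that only $Q\partial_\nu u$ survives and thus only $B(u)$ (rather than an uncontrolled $L^2(\partial\Omega)$ norm of the full normal derivative) appears in the final estimate. After this observation, the rest is a routine application of Cauchy--Schwarz together with the scaled trace inequality.
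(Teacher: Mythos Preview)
Your proposal is correct and follows essentially the same route as the paper: element-wise integration by parts, cancellation of the $P\partial_\nu u$ boundary contribution against the corresponding term in the definition of $a(u,z)$, and then Cauchy--Schwarz combined with the scaled trace inequality \eqref{eq:trace_inequality} to bound the volume, interior-jump, and $Q\partial_\nu u$ boundary terms by $[h^2\Delta u]_{H^0(\mathcal T_h)}$, $J(u)^{1/2}$, and $B(u)^{1/2}$ respectively. The paper's argument and yours are essentially identical.
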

      \begin{proof}
        An integration by parts reads
        \begin{align}
          &a(u,z)=(h\nabla u,h\nabla z)_{L^2(\Omega)}-h\int_{\p\Omega}h(P \p_\nu u)z\ dx\\
          &=-\sum_{K\in\T_h}\int_K h^2\Delta u z\ dx+\sum_{F\in\mathcal{F}_h}h\int_F\llbracket h\p_\nu u\rrbracket z\ dx+h\int_{\p\Omega}(Qh\p_\nu u)z\ dx,
        \end{align}
        where $\mathcal{F}_h$ is the set of elements faces in the interior of $\Omega$.\par
        Applying the trace inequality with scaling \eqref{eq:trace_inequality} on each face in $\mathcal{F}_h$ and $\p\Omega$, we can obtain
        \begin{align}
          &\sum_{F\in\mathcal{F}_h}h\int_F \llbracket h\p_\nu u\rrbracket z\ dx\leq \sum_{K\in\T_h}h^{1/2}\norm{\llbracket h\p_\nu u\rrbracket}_{L^2(\p K\setminus \p\Omega)}h^{1/2} \norm{z}_{L^2(\p K\setminus \p\Omega)}\\
          &\lesssim \sum_{K\in\T_h}h^{1/2}\norm{\llbracket h\p_\nu u\rrbracket}_{L^2(\p K\setminus \p\Omega)}\left(\norm{z}_{L^2(K)}+\norm{h \nabla z}_{L^2(K)}\right)\\
          &\leq \left(\sum_{K\in\T_h} h \norm{\llbracket h\p_\nu u\rrbracket}_{L^2(\p K\setminus \p\Omega)}^2\right)^{\frac{1}{2}}\left(\sum_{K\in\T_h}\norm{z}_{L^2(K)}^2+\norm{h\nabla z}_{L^2(K)}^2 \right)^{\frac{1}{2}}\\
          &\leq J(u)^{1/2}\norm{z}_{H^1(\T_h)},
        \end{align}
        and
        \begin{align}
          h\int_{\p\Omega}(Qh\p_\nu u)z\ dx\leq h^{1/2}\norm{Qh\p_\nu u}_{L^2(\p\Omega)}h^{1/2}\norm{z}_{L^2(\p\Omega)}\lesssim B(u)^{1/2}\norm{z}_{H^1(\T_h)}.
        \end{align}
        Finally, notice that
        \begin{align}
          \sum_{K\in\T_h}\int_K h^2(\Delta u)z\ dx&\leq \sum_{K\in\T_h}[h^2\Delta u]_{L^2(K)}\norm{z}_{L^2(K)}\\
          &\leq [h^2\Delta u]_{H^0(\T_h)} [z]_{H^0(\T_h)}\leq [h^2\Delta u]_{H^0(\T_h)}\norm{z}_{H^1(\T_h)}.
        \end{align}
        Thus we conclude that
        \begin{align}
          a(u,z)&\lesssim \left([h^2\Delta u]_{H^0(\T_h)}+J(u)^{1/2}+B(u)^{1/2}\right)\norm{z}_{H^1(\T_h)}\\
          &\lesssim (S(u)^{1/2}+B(u)^{1/2})\norm{z}_{H^1(\T_h)},
        \end{align}
      \end{proof}
      Finding saddle points of the Lagrangian $\mathcal{L}$ amounts to finding $(u,z)\in V_h \times V_h$ such that for all $(v,w)\in V_h \times V_h$, there holds
      \begin{equation}
        \begin{cases}
          a(u,w)-s^*(z,w)=h^2(f,w)_{L^2(\Omega)}+h^2(\beta,w)_{L^2(\p\Omega)},\\
          h^2(u,v)_{L^2(\omega)}+b(u,v)+a(v,z)+s(u,v)=h^2(q,v)_{L^2(\omega)}+h^3(Q \p_\nu v, \beta)_{L^2(\p\Omega)}\\+(h^2f,h^2\Delta v)_{H^0(\T_h)}.
        \end{cases}
    \end{equation}
      Here $s^*,\ b$ and $s$ are the bilinear forms corresponding to $S^*$, $B$ and $S$, respectively.
      Solving the above system is equivalent to finding $(u,z)\in V_h\times V_h$ such that for all $(v,w)\in V_h\times V_h$, there holds
      \begin{align}
        \label{equation_g}
        g(u,z,v,w)&=h^2(f,w)_{L^2(\Omega)}+h^2(\beta,w)_{L^2(\p\Omega)}+h^2(q,v)_{L^2(\omega)}\\
        &+h^3(Q \p_\nu v, \beta)_{L^2(\p\Omega)}+(h^2f,h^2\Delta v)_{H^0(\T_h)},
      \end{align}
      where the bilinear form $g$ is defined by
      \begin{align}
        g(u,z,v,w):=h^2(u,v)_{L^2(\omega)}+b(u,v)+a(v,z)+s(u,v)+a(u,w)-s^*(z,w).
      \end{align}
      
      \begin{remark}
      Recall that $f \in L^2(\Omega)$. 
          Hence, if $u\in H^1(\Omega)$ solves \eqref{question} then $u\in H^2_{\text{loc}}(\Omega)$. In particular, for all $F \in \mathcal F_h$ and all compact sets $K$ in the interior of $\Omega$ there holds $\llbracket\nabla u \rrbracket = 0$ on $F \cap K$. 
          As $K$ is arbitrary, the same holds on the whole set $F$, and 
          $J(u)=0$. Therefore, $(u,0)$ solves \eqref{equation_g} for all $(v,w)\in V_h\times V_h$ and the system \eqref{equation_g} is consistent.
      \end{remark}

      Preparing the terrain for the error analysis of the next section, we introduce the norm 
      \begin{equation}
        \trinorm{u,z}^2=B(u)+h^2 \norm{u}_{L^2(\omega)}^2+S(u)+S^*(z).
      \end{equation}
      According to Lemma \ref{a_c_estimate}, we have
      \begin{align}
        \norm{L^q_u}_{(H^1(\Omega))^*}&=\sup_{v\in H^1(\Omega)}\frac{L^q_u(v)}{\norm{v}_{H^1(\Omega)}}=\sup_{v\in H^1(\Omega)}\frac{a(u,v)}{h^2\norm{v}_{H^1(\Omega)}}\\
        &\lesssim \sup_{v\in H^1(\Omega)}\frac{(S(u)^{1/2}+B(u)^{1/2})\norm{v}_{H^1(\T_h)}}{h^2\norm{v}_{H^1(\Omega)}}\lesssim \frac{1}{h^2}\trinorm{u,0}.
      \end{align}
      By Theorem \ref{lipschitz_stability}, we have 
      \begin{equation}
          \norm{u}_{H^1(\Omega)}\lesssim \frac{1}{h^2}\trinorm{u,0}.
      \end{equation}
      Notice that $\trinorm{u,z}=\trinorm{u,0}+\trinorm{0,z}$, then $\trinorm{u,z}=0$ implies that $u=0$ and $z=0$. Hence $\trinorm{\cdot,\cdot}$ is indeed a norm.
       \begin{lemma}
        \label{lemma_3}
        \begin{equation}
          \trinorm{u,z}\lesssim \sup_{(v,w)\in V_h\times V_h}\frac{g(u,z,v,w)}{\trinorm{v,w}},\quad (u,z)\in V_h\times V_h. 
        \end{equation}
      \end{lemma}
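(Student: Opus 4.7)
The plan is to exploit the skew structure of the coupling terms $a(v,z)$ and $a(u,w)$ in $g$ by testing symmetrically. Concretely, I would set $(v,w) = (u,-z)$ and evaluate
\begin{equation*}
g(u,z,u,-z) = h^2(u,u)_{L^2(\omega)} + b(u,u) + a(u,z) + s(u,u) + a(u,-z) - s^*(z,-z).
\end{equation*}
The two occurrences of $a$ cancel: $a(u,z) + a(u,-z) = 0$. The remaining diagonal pieces are, by definition of $B$, $S$, $S^*$ and the fact that $b$, $s$, $s^*$ are the bilinear forms polarising these quadratic functionals,
\begin{equation*}
b(u,u) = B(u), \quad s(u,u) = S(u), \quad -s^*(z,-z) = s^*(z,z) = S^*(z).
\end{equation*}
Hence $g(u,z,u,-z) = h^2\norm{u}_{L^2(\omega)}^2 + B(u) + S(u) + S^*(z) = \trinorm{u,z}^2$.

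Next I would observe that $S^*(-z) = S^*(z)$, so $\trinorm{u,-z} = \trinorm{u,z}$, and since $(u,-z) \in V_h\times V_h$ is an admissible test pair, one obtains
\begin{equation*}
\sup_{(v,w)\in V_h\times V_h} \frac{g(u,z,v,w)}{\trinorm{v,w}} \;\geq\; \frac{g(u,z,u,-z)}{\trinorm{u,-z}} \;=\; \frac{\trinorm{u,z}^2}{\trinorm{u,z}} \;=\; \trinorm{u,z},
\end{equation*}
which is the claimed estimate (indeed with constant $1$, so $\lesssim$ holds trivially).

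I do not expect a real obstacle here: the Lagrangian \eqref{eq:def_lagrangian} has been arranged precisely so that the primal–dual coupling enters antisymmetrically through $a$, and the stabilisations $B$, $S$, $S^*$ together with the $L^2(\omega)$-fit supply control of exactly the four quadratic quantities assembled in $\trinorm{\cdot,\cdot}$. The only thing to verify is the sign bookkeeping in the cancellation of the $a$-terms and the evenness of $S^*$, both of which are immediate from the definitions. Note that no continuity or Lipschitz stability input (such as Theorem \ref{lipschitz_stability}) is needed at this stage; those will enter the subsequent error analysis when comparing the discrete and continuous solutions.
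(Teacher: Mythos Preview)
Your proof is correct and follows exactly the paper's approach: the paper simply records the identity $\trinorm{u,z}^2=g(u,z,u,-z)$ and says the lemma follows immediately. Your explicit sign bookkeeping of the $a$-cancellation and the observation $\trinorm{u,-z}=\trinorm{u,z}$ make this one-line proof fully explicit.
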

      \begin{proof}
        Follows immediately from
        \begin{equation}
          \trinorm{u,z}^2=g(u,z,u,-z).
        \end{equation}
      \end{proof}
      Due to Lemma \ref{lemma_3}, the linear system \eqref{equation_g} admits a unique solution for any $f\in L^2(\Omega)$ and $q\in L^2(\omega)$. Denoting the unique solution by $(u_h,z_h)$ and letting $u$ solve \eqref{question}, we have the Galerkin orthogonality
      \begin{equation}
        \label{Galerkin_orthogonality}
        g(u_h-u,z_h,v,w)=0,\ (v,w)\in V_h\times V_h.
      \end{equation}
 \section{Necessity of regularization}\label{sec:necessity}

One may wonder if the regularization introduced in Section \ref{sec:FEM} is really needed. 
To illustrate necessity of a regularization of some form, let us consider a straightforward data fitting approach.  
To simplify the discussion, we suppose that $f=0$ and therefore $\beta=0$ in \eqref{question}. Let $\{\psi_i\}_{i=1}^N$ be a basis of $\mathcal{V}_N$ and $w^i_h\in V_h\cap H^1_\omega(\Omega)$ be the solution of
    \begin{equation}
\int_\Omega \nabla w^i_h\cdot \nabla v\, dx = \int_{\p\Omega} \psi_i v\, dx,\ \forall v\in V_h.
    \end{equation}
Let $W_h$ be the space spanned by $\{w^i_h\}_{i=1}^N\oplus \{1\}$ where $1$ is the constant function that is identical to $1$ in $\Omega$. 
It feels natural to try to approximate the solution of \eqref{question} 
via 
    \begin{equation}
\label{eq:simple_method}
u_h=\underset{w_h\in W_h}{\text{arg min}}\norm{w_h-q}_{L^2(\omega)}.
    \end{equation}
However, such $u_h$ may not be unique. Indeed, if there exists $w_h\in W_h$ that satisfies $w_h|_{\omega}=0$, then \eqref{eq:simple_method} does not define $u_h$ uniquely. 
In fact, this may happen even when $N=1$ if $h > 0$ is not small enough, as shown by Proposition \ref{prop:necessity} below.
In other words, the mesh size $h > 0$ can not be chosen independently of $\mathcal{V}$. 

Presumably there is $h_0(\mathcal{V},\Omega,\omega) > 0$ such that all $w_h \in W_h$ have the unique continuation property for 
\begin{equation}
  0\leq h\leq h_0(\mathcal{V},\Omega,\omega),
\end{equation}        
but we have opted not to try to find such a selection rule $h_0$. We expect that $h_0$ would depend on the implicit constant in \eqref{eq:Lipschitz_stability}. On the other hand, our method introduced in Section \ref{sec:FEM} does not impose any constraints on the mesh size, as the system
\eqref{equation_g} has a unique solution for any $h > 0$.

\begin{figure}[h!tbp]
    \centering
    \begin{tikzpicture}[scale=5]
      \def\h{0.2} % Mesh size
      \def\n{5}   % Number of intervals (1/h)
      
      % Draw grid points
      \foreach \i in {0,1,2,3,4,5} {
          \foreach \j in {0,1,2,3,4,5} {
              \node[fill=black,circle,scale=0.3] at ({\i*\h},{\j*\h}) {};
          }
      }
      
      % Draw triangular mesh
      \foreach \i in {0,1,2,3,4} {
          \foreach \j in {0,1,2,3,4} {
              \draw[thick] ({\i*\h},{\j*\h}) -- ({(\i+1)*\h},{\j*\h});
              \draw[thick] ({\i*\h},{\j*\h}) -- ({\i*\h},{(\j+1)*\h});
              \draw[thick] ({\i*\h},{(\j+1)*\h}) -- ({(\i+1)*\h},{\j*\h});
              \draw[thick] ({(\i+1)*\h},{\j*\h}) -- ({(\i+1)*\h},{(\j+1)*\h});
              \draw[thick] ({\i*\h},{(\j+1)*\h}) -- ({(\i+1)*\h},{(\j+1)*\h});
          }
      }
      
      % Add corner labels
      \node[below left] at (0,0) {(0,0)};
      \node[below right] at (1,0) {(1,0)};
      \node[above left] at (0,1) {(0,1)};
      \node[above right] at (1,1) {(1,1)};
  \end{tikzpicture}
  \caption{A uniform triangular decomposition on $\Omega=[0,1]\times[0,1]$ with $n=6$.}  
  \label{fig:mesh}
  \end{figure}

To simplify the discussion, we suppose that $\Omega:=[0,1]\times[0,1]$ and the mesh size $h=\frac{\sqrt{2}}{n-1}$ where $n>3$ is an integer. Let $\mathcal{T}_h$ be a uniform triangular decomposition of $\Omega$ as exemplified in Figure \ref{fig:mesh}, and let the polynomial order of $V_h$ be one. Then $V_h$ is a $n^2$ dimensional space. The set of nodal basis functions $\{\phi_i\}_{i=1}^{n^2}$ forms a basis of $V_h$. We notice that $\{\phi_i\}_{i=1}^{n^2}$ consists of $4n-4$ boundary nodal basis functions and $(n-2)^2$ interior nodal basis functions.\par
  Let $\{\phi_i\}_{i=1}^{4n-4}$ be boundary nodal functions. We denote the space of continuous functions with zero mean value on $\p\Omega$ by $\Gamma$, that is 
  \begin{equation}
    \Gamma:=\{g\in C(\p\Omega) \mid \int_{\p\Omega} g\, dx=0\}.
  \end{equation}
  And we define the operator
  \begin{align}
    \Lambda: \Gamma\to \R^{4n-4}
  \end{align}
  as 
  \begin{equation}
    \Lambda (g)= \bb=(b_1,b_2,\cdots,b_{4n-4}),\ g\in \Gamma,
  \end{equation}
  where
  \begin{equation}
    b_i=\int_{\p\Omega}g\phi_i\, dx, 1\leq i\leq 4n-4.
  \end{equation}
  Write the vector $\boldsymbol{e}=(\underbrace{1,1,\cdots,1}_{4n-4})^T$. Consider the functional it induced on $\R^{4n-4}$
  \begin{equation}
    T(\boldsymbol{a})=\boldsymbol{e}^T \boldsymbol{a}=\sum_{i=1}^{4n-4}a_i,\ \boldsymbol{a}=(a_1,a_2,\cdots,a_{4n-4})\in\R^{4n-4}. 
  \end{equation}
  We have the following lemma.
  \begin{lemma}
    \label{lm:necessity}
    Let $R(\Lambda):=\{\Lambda g\mid g\in \Gamma\}$ be the range of $\Lambda$, then we have
    \begin{equation}
      T^{-1}(0)\subset R(\Lambda).
    \end{equation}
  \end{lemma}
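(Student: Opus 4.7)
The plan is to realize the prescribed vector $\mathbf{a}$ by a boundary piecewise linear function, exploiting the fact that the boundary nodal basis functions form a partition of unity on $\p\Omega$. Since every interior nodal function vanishes on $\p\Omega$, one has $\sum_{i=1}^{4n-4}\phi_i|_{\p\Omega}=1$. Testing this identity against the $\phi_j$ and integrating, the vector $\mathbf{m}:=\bigl(\int_{\p\Omega}\phi_i\,dx\bigr)_{i=1}^{4n-4}$ satisfies $\mathbf{m}=M\mathbf{e}$, where $M_{ij}:=\int_{\p\Omega}\phi_i\phi_j\,dx$ is the boundary mass matrix. In particular, for any continuous $g$ on $\p\Omega$ one has $T(\Lambda g)=\int_{\p\Omega} g\,dx$, so $\Lambda$ already maps $\Gamma$ into $T^{-1}(0)$ automatically, and the opposite inclusion is what needs work.

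For this, I restrict the ansatz for $g$ to the $(4n-4)$-dimensional space $W:=\mathrm{span}\{\phi_j|_{\p\Omega}\}_{j=1}^{4n-4}$ of continuous piecewise linears on $\p\Omega$. Writing $g=\sum_j c_j\,\phi_j|_{\p\Omega}$ yields $\Lambda g=M\mathbf{c}$. The restrictions $\phi_j|_{\p\Omega}$ are the usual one-dimensional hat functions on the closed polygonal curve $\p\Omega$: even at the corners of the square, each restriction to an incident boundary edge is affine, equals $1$ at the associated node and $0$ at the neighbouring boundary nodes. They are therefore linearly independent in $L^2(\p\Omega)$, and $M$ is symmetric positive definite, hence invertible.

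Given $\mathbf{a}\in T^{-1}(0)$, I then set $\mathbf{c}:=M^{-1}\mathbf{a}$ and $g:=\sum_j c_j\,\phi_j|_{\p\Omega}$. By construction $\Lambda g=M\mathbf{c}=\mathbf{a}$. It remains to check $g\in\Gamma$: continuity is clear, and for the zero-mean condition I use the identity $\mathbf{m}=M\mathbf{e}$ already derived,
\[
\int_{\p\Omega}g\,dx=\mathbf{m}^T\mathbf{c}=(M\mathbf{e})^T M^{-1}\mathbf{a}=\mathbf{e}^T\mathbf{a}=T(\mathbf{a})=0.
\]
The only point that really requires care is the invertibility of $M$, equivalently the linear independence of $\{\phi_j|_{\p\Omega}\}$ on the polygonal curve $\p\Omega$; once that is in hand, the rest of the argument is linear algebra combined with the partition-of-unity identity, and I expect no analytic obstacles.
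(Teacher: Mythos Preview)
Your argument is correct, and it takes a genuinely different route from the paper. The paper proceeds constructively: on each boundary edge $[y_1,y_2]$ shared by two adjacent boundary nodal functions $\phi_i,\phi_{i+1}$ it picks a smooth function $g_i$ that is odd about the midpoint, so that $\Lambda(g_i/c_i)=(0,\ldots,0,1,-1,0,\ldots,0)$. These $4n-5$ vectors span $T^{-1}(0)$, which gives the inclusion. Your proof instead stays inside the trace space $W=\mathrm{span}\{\phi_j|_{\p\Omega}\}$ and reduces the question to invertibility of the boundary mass matrix $M$, together with the partition-of-unity identity $M\mathbf e=\mathbf m$; the mean-zero verification $\int_{\p\Omega}g\,dx=\mathbf e^T\mathbf a=0$ is clean and actually yields the full equality $R(\Lambda)=T^{-1}(0)$ rather than just one inclusion. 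The paper's construction has the advantage that the preimages can be chosen smooth and supported on a single edge (useful if one later wants $g$ with extra regularity or locality), whereas your approach is shorter, uses only standard finite-element facts, and makes the linear-algebraic structure transparent.
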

  \begin{proof}
    Rearrange the index such that $\phi_i$ and $\phi_{i+1}$ are two adjacent boundary nodal basis functions for $1\leq i\leq 4n-5$. Without loss of generality, we assume that $\supp(\phi_i|_{\p\Omega})\cap \supp(\phi_{i+1}|_{\p\Omega})=[y_1,y_2]$ and
    \begin{equation}
      \phi_i|_{[y_1,y_2]}=1-\frac{y-y_1}{y_2-y_1},\ \phi_{i+1}|_{[y_1,y_2]}=\frac{y-y_1}{y_2-y_1}.
    \end{equation}
    Notice that $[y_1,y_2]\cap \supp(\phi_j|_{\p\Omega})=\emptyset$ if $j\neq i, i+1$.
    Then for a nonzero smooth function $g_i$ that is supported in $(y_1,y_2)$ and that is odd corresponding to the midpoint of the interval $[y_1,y_2]$, we have
    \begin{equation}
      \int_{\p\Omega} g_i\phi_i\, dx = -\int_{\p\Omega} g_i\phi_{i+1}\, dx=c_i\neq 0,
    \end{equation}
    and
    \begin{equation}
      \int_{\p\Omega} g_i \phi_j\, dx=0, \text{ if }j\neq i,\ i+1.
    \end{equation}
    It follows that
    \begin{equation}
      \Lambda\left(g_i/c_i\right) =  \bb_i=(\underbrace{0,\cdots,0}_{(i-1)\ zeros},1,-1,\underbrace{0,\cdots,0}_{(4n-i-5)\ zeros}),\ 1\leq i\leq 4n-5.
    \end{equation}
    Observe that $\{\bb_i\}_{i=1}^{4n-5}$ forms a basis of $T^{-1}(0)$, thus $T^{-1}(0)\subset R(\Lambda)$ follows immediately.
  \end{proof}
  \begin{proposition}
  \label{prop:necessity}
     Suppose $\Omega:=[0,1]\times[0,1]$. For any fixed integer $n>3$ and $h=\frac{\sqrt{2}}{n-1}$, we set $\omega:=[\frac{1}{n-1},1-\frac{1}{n-1}]\times [\frac{1}{n-1},1-\frac{1}{n-1}]$. Let $\mathcal{T}_h$ be the uniform triangular decomposition of $\Omega$ and the polynomial order of $V_h$ be one. Then there exists a function $g\in \Gamma$, such that the solution $u_h\in V_h\cap H^1_\omega(\Omega)$ of 
    \begin{equation}
    \label{eq:var}
    \int_\Omega \nabla u_h\cdot \nabla v\, dx = \int_{\p\Omega} g v\, dx,\ \forall v\in V_h.
  \end{equation}
    satisfies  $u_h\neq 0$ and $u_h|_{\omega}=0$.
  \end{proposition}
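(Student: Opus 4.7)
The plan is to take $u_h$ explicitly as the $\mathbb{P}_1$ nodal basis function $\phi_c$ at the corner $c=(0,0)$ of $\Omega$, and then to construct a suitable $g\in\Gamma$ from $u_h$ by invoking Lemma \ref{lm:necessity}. Writing $h'=1/(n-1)$, the support of $\phi_c$ on the mesh of Figure \ref{fig:mesh} is the single corner triangle $T=[(0,0),(h',0),(0,h')]$, whose three vertices are all boundary nodes and which is disjoint from $\omega=[h',1-h']^2$. Consequently $u_h\neq 0$, $u_h|_\omega=0$, and in particular $u_h\in V_h\cap H^1_\omega(\Omega)$.

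Next I would verify that this $u_h$ satisfies \eqref{eq:var} for a suitable $g$. Setting $b_i=\int_\Omega\nabla u_h\cdot\nabla\phi_i$ for each nodal basis function $\phi_i$, the support of $\phi_i$ meets $T$ only when $i$ is a vertex of $T$, so $b_i$ vanishes outside the three nodes $c$, $(h',0)$, $(0,h')$; a direct calculation on $T$ gives $b_c=1$ and $b_{(h',0)}=b_{(0,h')}=-1/2$. In particular $b_i=0$ for every interior node $i$, and $\sum_{i=1}^{4n-4} b_i=0$. By Lemma \ref{lm:necessity} the vector $(b_i)_{i=1}^{4n-4}$ then lies in $R(\Lambda)$, which supplies $g\in\Gamma$ with $\int_{\partial\Omega}g\phi_i=b_i$ for each boundary node $i$. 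Combined with $\phi_i|_{\partial\Omega}=0$ and $b_i=0$ for interior $i$, this yields \eqref{eq:var} by linearity.

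Finally, uniqueness of the solution of \eqref{eq:var} in $V_h\cap H^1_\omega(\Omega)$ follows from the coercivity of $l$ on $H^1_\omega(\Omega)$ already established in \eqref{Poincare_inequality} (compare Lemma \ref{existence}), so $u_h=\phi_c$ is in fact the unique solution, concluding the proof. The only delicate point is the compatibility $\sum_i b_i=0$ required by Lemma \ref{lm:necessity}, and placing $c$ at a corner is precisely what makes this cost-free: the support of $\phi_c$ shrinks to a single triangle inside the boundary strip $\Omega\setminus\omega$, which simultaneously forces $u_h|_\omega=0$ and $b_i=0$ for every interior $i$; the desired sum then vanishes automatically because $\sum_i\phi_i\equiv 1$ yields $\sum_i b_i=\int_\Omega\nabla u_h\cdot\nabla 1=0$.
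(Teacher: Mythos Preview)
Your argument is correct and takes a genuinely different route from the paper's. Both proofs end by invoking Lemma~\ref{lm:necessity} to produce $g\in\Gamma$ from a prescribed boundary-moment vector $(b_i)$ with $\sum_i b_i=0$, but they obtain that vector differently. The paper decomposes the stiffness matrix into interior/boundary blocks, sets the interior coefficients $\boldsymbol{\alpha}_1$ to zero, and uses a rank bound (at most $4n-12$ interior nodes couple to the boundary) to show that the constraints $B\boldsymbol{\alpha}_2=0$ and $\boldsymbol{e}^T D\boldsymbol{\alpha}_2=0$ leave a null space of dimension~$7$; any nonzero $\boldsymbol{\alpha}_2$ therein then furnishes $u_h$. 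You instead exhibit an explicit element of that null space: the corner hat function $\phi_c$ at $(0,0)$, whose support on this particular mesh collapses to a single triangle with only boundary vertices, making $b_i=0$ for every interior $i$ automatic; the compatibility $\sum_i b_i=0$ then drops out of the partition of unity $\sum_i\phi_i\equiv1$.

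Your approach is shorter, fully explicit, and sidesteps the block decomposition and rank count entirely. The paper's approach, on the other hand, shows that the failure of discrete unique continuation is not isolated but occurs along a $7$-dimensional family of boundary data, and its dimension argument is less tied to the accident that one corner node sits in a single element of this specific mesh.
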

  
  \begin{proof}
    Let $\{\phi_i\}_{i=1}^{n^2}$ be nodal basis functions of $V_h$ and rearrange the index so that $\{\phi_i\}_{i=1}^{(n-2)^2}$ are the interior nodal basis functions while $\{\phi_i\}_{i=(n-2)^2+1}^{n^2}$ are the boundary nodal basis functions. Then we consider
  \begin{equation}
    \label{eq:linear_system}
    \boldsymbol{A} \boldsymbol{\alpha}=\boldsymbol{b},
  \end{equation}
  where $\boldsymbol{\alpha}=(\alpha_1,\cdots,\alpha_{n^2})^T$, the entries of $\boldsymbol{A}=(a_{ij})_{n^2\times n^2}$ are $a_{ij}=\int_\Omega \nabla \phi_i\cdot \nabla \phi_j\, dx$, and the entries of $\boldsymbol{b}=(b_j)_{n^2}$ are $b_j=\int_{\p\Omega} \psi \phi_j\, dx$ for some $\psi\in \Gamma$. Our first observation is that $b_j=0$ for $j\leq (n-2)^2$ since $\{\phi_j\}_{j=1}^{(n-2)^2}$ are interior nodal basis functions. Then we decompose \eqref{eq:linear_system} into the following form
  \begin{equation}
    \label{eq:linear_sys_1}
    \begin{pmatrix}
      A & B \\
      B^T & D
    \end{pmatrix}
    \begin{pmatrix}
      \boldsymbol{\alpha}_1\\
      \boldsymbol{\alpha}_2
    \end{pmatrix}
    =
    \begin{pmatrix}
      0 \\
      \boldsymbol{b}_2
    \end{pmatrix}
  \end{equation}
  Here $\boldsymbol{\alpha}_1\in \R^{(n-2)^2}$, $\boldsymbol{\alpha}_2,\ \bb_2\in \R^{4n-4}$, $A\in \R^{(n-2)^2\times (n-2)^2}$, $B\in \R^{(4n-4)\times (n-2)^2}$ and $D\in \R^{(4n-4)\times (4n-4)}$. For the interior nodal basis functions $\{\phi_j\}_{j=1}^{(n-2)^2}$, 
  \begin{equation}
    \int_\Omega \nabla\phi_j \nabla\phi_i\, dx\neq 0\ \text{for some } (n-2)^2+1\leq i \leq n^2
  \end{equation}
  only if $\phi_j$ is an element adjacent to the boundary. For uniform triangular decomposition, there are only $4n-12$ interior elements adjacent to boundary elements. That is, at most $4n-12$ rows in $B$ are nonzero.\par
  Let $\boldsymbol{\alpha}_1=0$. Since the dimension of $\boldsymbol{\alpha}_2$ is $4n-4$ and the rank of $B$ is $4n-12$, the linear system 
  \begin{equation}
    \begin{cases}
      B\boldsymbol{\alpha}_2=0\\
      \boldsymbol{e}^T D\boldsymbol{\alpha}_2=0 
    \end{cases}
  \end{equation}
  admits a 7-dimensional null space $X\subset \R^{4n-4}$. Select a nonzero $\boldsymbol{\alpha}_2\in X$. According to Lemma \ref{lm:necessity}, there exists a function $g\in \Gamma$ such that $\Lambda(g)=D\boldsymbol{\alpha}_2$. Recall that
  \begin{equation}
      \boldsymbol{\alpha}=(\alpha_1,\cdots,\alpha_{n^2})^T=\begin{pmatrix}
      0 \\
      \boldsymbol{\alpha}_2
    \end{pmatrix}.
  \end{equation}
  Set $u_h=\sum_{i=1}^{n^2} \alpha_i \phi_i$, then $u_h$ solves \eqref{eq:var}. We notice that $u_h|_{[h,1-h]\times [h,1-h]}=0$ since $\alpha_j=0,\ 1\leq j\leq (n-2)^2$, corresponding to the coefficients of interior nodal basis functions.
\end{proof}
      
\section{Error analysis}\label{sec:FEM_error}
       The objective of this section is to leverage the stability of Theorem \ref{lipschitz_stability} to derive optimal error estimates. We start with a technical lemma.
      \begin{lemma}
        \label{lemma_laplacian}
        Suppose that $u\in H^{1}(\Omega)$ and $(u_h,z_h)\in V_h\times V_h$ solve \eqref{question} and \eqref{equation_g}, respectively. Then there holds
        \begin{equation}
          \label{eq_lemma_laplacian}
          h^{-1}\norm{h^2 L^q_{u-u_h}}_{(H^1(\Omega))^*}\lesssim S(u-u_h)^{1/2}+B(u-u_h)^{1/2}+S^*(z_h)^{1/2}.
        \end{equation}
      \end{lemma}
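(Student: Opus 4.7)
The plan is to rewrite the dual norm on the left in a form amenable to the Galerkin orthogonality \eqref{Galerkin_orthogonality}, and then to control the resulting expression by decomposing the test function into a finite element interpolant plus a remainder. The key observation is that the bilinear form $a$ is essentially a rescaled version of $L^q$: for every $u,v\in H^1(\Omega)$ with $\partial_\nu u|_{\partial\Omega}\in L^2(\partial\Omega)$ one has $a(u,v)=h^2 L^q_u(v)$. Hence
\begin{equation*}
h^{-1}\norm{h^2 L^q_{u-u_h}}_{(H^1(\Omega))^*}=h^{-1}\sup_{v\in H^1(\Omega)}\frac{a(u-u_h,v)}{\norm{v}_{H^1(\Omega)}},
\end{equation*}
so the job reduces to bounding $a(u-u_h,v)$ by the right-hand side of \eqref{eq_lemma_laplacian} times $h\norm{v}_{H^1(\Omega)}$.

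Next, for $v\in H^1(\Omega)$, I would introduce the Scott--Zhang quasi-interpolant $v_h\in V_h$, which satisfies the standard local estimates $\norm{v-v_h}_{L^2(K)}+\norm{h\nabla(v-v_h)}_{L^2(K)}\lesssim h\,\norm{v}_{H^1(\tilde K)}$ on every element $K$, together with $H^1$-stability $\norm{v_h}_{H^1(\Omega)}\lesssim \norm{v}_{H^1(\Omega)}$. Summing the local bounds gives $\norm{v-v_h}_{H^1(\T_h)}\lesssim h\,\norm{v}_{H^1(\Omega)}$. Writing
\begin{equation*}
a(u-u_h,v)=a(u-u_h,v-v_h)+a(u-u_h,v_h)
\end{equation*}
and applying Lemma \ref{a_c_estimate} to the first term yields
\begin{equation*}
|a(u-u_h,v-v_h)|\lesssim \bigl(S(u-u_h)^{1/2}+B(u-u_h)^{1/2}\bigr)\,h\norm{v}_{H^1(\Omega)}.
\end{equation*}
Lemma \ref{a_c_estimate} applies since $u$ solves \eqref{question} (so $\Delta u=-f\in L^2$ and $\partial_\nu u|_{\partial\Omega}\in L^2(\partial\Omega)$) and $u_h\in V_h$ is piecewise polynomial.

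For the discrete term $a(u-u_h,v_h)$, I would exploit Galerkin orthogonality \eqref{Galerkin_orthogonality}. Testing with $(0,v_h)\in V_h\times V_h$ and using the definition of $g$ gives $a(u_h-u,v_h)-s^*(z_h,v_h)=0$, i.e.
\begin{equation*}
a(u-u_h,v_h)=-s^*(z_h,v_h)=-h^2(z_h,v_h)_{H^1(\Omega)}.
\end{equation*}
Cauchy--Schwarz and the $H^1$-stability of the Scott--Zhang projector then produce
\begin{equation*}
|a(u-u_h,v_h)|\le S^*(z_h)^{1/2}\,h\norm{v_h}_{H^1(\Omega)}\lesssim S^*(z_h)^{1/2}\,h\norm{v}_{H^1(\Omega)}.
\end{equation*}
Combining the two bounds, dividing by $\norm{v}_{H^1(\Omega)}$, taking the supremum over $v\in H^1(\Omega)$, and finally dividing by $h$ delivers \eqref{eq_lemma_laplacian}.

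The only real subtlety is the bookkeeping of $h$-powers: one must check that the extra $h$ gained from interpolation on $v-v_h$ precisely cancels the $h^{-1}$ prefactor on the left, and that $s^*$ contributes one factor of $h$ via the definition $S^*(z)=h^2\norm{z}_{H^1(\Omega)}^2$. Everything else is routine.
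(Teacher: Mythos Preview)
Your proposal is correct and follows essentially the same route as the paper: rewrite the dual norm via $a(u-u_h,\cdot)=h^2 L^q_{u-u_h}(\cdot)$, split the test function as $v=(v-v_h)+v_h$ with $v_h$ the Scott--Zhang interpolant, apply Lemma~\ref{a_c_estimate} to the remainder piece, and use Galerkin orthogonality \eqref{Galerkin_orthogonality} with $(0,v_h)$ to convert the discrete piece into $-s^*(z_h,v_h)$. The only cosmetic difference is that the paper phrases the interpolation estimate in the semiclassical norm $\norm{v-i_hv}_{H^1(\T_h)}\lesssim[v]_{H^1(\T_h)}$ and bounds $\norm{i_h v}_{H^1(\Omega)}$ via the triangle inequality rather than invoking $H^1$-stability directly.
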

      \begin{proof}
        We have
        \begin{align}
          \norm{h^2 L^q_{u-u_h}}_{(H^1(\Omega))^*}=\sup_{w\in H^1(\Omega)}\frac{h^2L^q_{u-u_h}(w)_{L^2(\Omega)}}{\norm{w}_{H^1(\Omega)}}=\sup_{w\in H^1(\Omega)}\frac{a(u-u_h,w)}{\norm{w}_{H^1(\Omega)}}.
        \end{align}
        Let $i_h: H^m(\Omega)\to V_h$ be an interpolator satisfying
        \begin{equation}
          \label{interpolation}
              \norm{u-i_h u}_{H^m(\T_h)}\lesssim [u]_{H^m(\T_h)},\ u\in H^m(\Omega).
        \end{equation}
        for all $m\geq 1$. The Scott-Zhang interpolator is a possible choice \cite{Scott-Zhang}.
        For any $w\in H^1(\Omega)$, \eqref{Galerkin_orthogonality} gives
        \begin{equation}
          g(u_h-u,z_h,0,i_h w)=a(u_h-u,i_h w)-s^*(z_h,i_h w)=0.
        \end{equation}
        Applying Lemma \ref{a_c_estimate}, we have
        \begin{equation}
          \begin{split}
          &a(u-u_h,w)=a(u-u_h,w-i_h w)-s^*(z_h,i_h w)\\
          &\lesssim (S(u-u_h)^{1/2}+B(u-u_h)^{1/2})\norm{w-i_h w}_{H^1(\T_h)}+S^*(z_h)^{1/2} S^*(i_h w)^{1/2}\\
          &\lesssim (S(u-u_h)^{1/2}+B(u-u_h)^{1/2})[w]_{H^1(\T_h)}+S^*(z_h)^{1/2}h(\norm{i_h w-w}_{H^1(\Omega)}+\norm{w}_{H^1(\Omega)})\\
          &\lesssim h\left(S(u-u_h)^{1/2}+B(u-u_h)^{1/2}+S^*(z_h)^{1/2}\right)\norm{w}_{H^1(\Omega)},
        \end{split}
        \end{equation}
        which implies \eqref{eq_lemma_laplacian}.
      \end{proof}
      Then we can prove the a posteriori error estimate % \HOX{Please, if possible, rewrite the statement of theorem \ref{thm:aposterori} in such a way that we only need to assume $u \in H^1(\Omega)$. The $u$ in the right hand side can be replaced by a priori known quantities since it is only residuals, or? If this is not possible, then what is the least regularity that we need to make sense of the formulation? $H^{\frac{3}{2}+\epsilon}(\Omega)$ would be nice, then we can handle general polygonal domains.}
      \begin{theorem}\label{thm:aposterori}
        Suppose that $u\in H^{1}(\Omega)$ and $(u_h,z_h)\in V_h$ solves \eqref{question} and \eqref{equation_g}, respectively. Then there holds
        \begin{equation}
          \label{posterior_estimate}
          \begin{split}
            h\norm{u-u_h}_{H^1(\Omega)}&\lesssim h\norm{u_h-q}_{L^2(\omega)}+J(u_h)^{1/2}+h^{3/2}\norm{Q\p_\nu u_h-\beta}_{L^2(\p\Omega)}\\
            &+[h^2\Delta u_h+h^2 f]_{H^0(\T_h)}+h\norm{z_h}_{H^1(\Omega)}\lesssim \trinorm{u-u_h,z_h}.
          \end{split}
        \end{equation}
      \end{theorem}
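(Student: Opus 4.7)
The plan is to apply the Lipschitz stability estimate of Theorem \ref{lipschitz_stability} to $u-u_h$, then use Lemma \ref{lemma_laplacian} to control the dual-norm term that arises, and finally rewrite every ingredient using the fact that $u$ solves \eqref{question} so that each right-hand side quantity becomes a computable residual involving $u_h$ and $z_h$.

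First I will observe that $u-u_h$ is admissible for Theorem \ref{lipschitz_stability}: indeed $u\in H^1(\Omega)$ with $\partial_\nu u\in L^2(\partial\Omega)$ by assumption, and $u_h\in V_h$ is piecewise polynomial so its normal trace on $\partial\Omega$ is in $L^2(\partial\Omega)$. Applying \eqref{eq:Lipschitz_stability} to $u-u_h$ and multiplying by $h$ gives
\begin{equation*}
h\norm{u-u_h}_{H^1(\Omega)}\lesssim h\norm{u-u_h}_{L^2(\omega)}+h\norm{L^q_{u-u_h}}_{(H^1(\Omega))^*}.
\end{equation*}
The last term equals $h^{-1}\norm{h^2 L^q_{u-u_h}}_{(H^1(\Omega))^*}$, and Lemma \ref{lemma_laplacian} bounds it by $S(u-u_h)^{1/2}+B(u-u_h)^{1/2}+S^*(z_h)^{1/2}$.

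Next I will translate the five terms into residuals of $u_h$ using the identities forced by \eqref{question}. Since $u=q$ on $\omega$, $\norm{u-u_h}_{L^2(\omega)}=\norm{u_h-q}_{L^2(\omega)}$. Since $u\in H^2_{\mathrm{loc}}(\Omega)$, $J(u)=0$, so $J(u-u_h)^{1/2}=J(u_h)^{1/2}$. Since $-\Delta u=f$, $[h^2\Delta(u-u_h)]_{H^0(\T_h)}=[h^2\Delta u_h+h^2 f]_{H^0(\T_h)}$. Since $\partial_\nu u\in\mathcal V_N+\beta$, we have $Q\partial_\nu u=\beta$, hence $B(u-u_h)^{1/2}=h^{3/2}\norm{Q\partial_\nu u_h-\beta}_{L^2(\partial\Omega)}$. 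Finally $S^*(z_h)^{1/2}=h\norm{z_h}_{H^1(\Omega)}$. Plugging these in yields the first inequality of \eqref{posterior_estimate}. The second inequality then follows by reading the same identifications backwards: the five terms on the right-hand side are precisely (up to absorbing $[h^2\Delta(u-u_h)]_{H^0(\T_h)}$ and $J(u-u_h)^{1/2}$ into $S(u-u_h)^{1/2}$) the square roots of the terms in $\trinorm{u-u_h,z_h}^2$.

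There is no substantial obstacle; the argument is essentially a bookkeeping composition of Theorem \ref{lipschitz_stability} and Lemma \ref{lemma_laplacian}. The only point one needs to be careful about is the scaling: the Lipschitz stability lives on the continuous side and must be multiplied by $h$ before it can be matched with the $h^2$-scaled functional inside Lemma \ref{lemma_laplacian}, and one has to check that dropping the nonnegative contribution $\norm{P\partial_\nu(u-u_h)}_{L^2(\partial\Omega)}$ on the left-hand side of \eqref{eq:Lipschitz_stability} does not lose anything relevant (it does not, since we only aim for the $H^1$-bound). All the identifications of residuals are direct consequences of the defining equations for $u$ in \eqref{question}.
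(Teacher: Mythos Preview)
Your proposal is correct and follows essentially the same route as the paper: apply Theorem~\ref{lipschitz_stability} to $u-u_h$, invoke Lemma~\ref{lemma_laplacian} for the dual-norm term, and then use $u|_\omega=q$, $J(u)=0$, $-\Delta u=f$, $Q\partial_\nu u=\beta$ to convert each contribution into a computable residual, with the second inequality being the tautological identification with $\trinorm{u-u_h,z_h}$.
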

      \begin{proof}
        Applying Theorem \ref{lipschitz_stability}, we have
        \begin{equation}
          h\norm{u-u_h}_{H^1(\Omega)}\lesssim h\norm{u_h-q}_{L^2(\omega)}+h\norm{L^q_{u-u_h}}_{(H^1(\Omega))^*}.
        \end{equation}
        Here we recall $u|_\omega=q$. Using $-\Delta u=f$, $J(u)=0$, $Q\p_\nu u=\beta$ and Lemma \ref{lemma_laplacian}, we have
        \begin{equation}
          \begin{split}
          h\norm{L^q_{u-u_h}}_{(H^1(\Omega))^*}&\lesssim S(u-u_h)^{1/2}+B(u-u_h)^{1/2}+S^*(z_h)^{1/2}\\
          &\lesssim J(u_h)^{1/2}+h^{3/2}\norm{Q\p_\nu u_h-\beta}_{L^2(\p\Omega)}+[h^2\Delta u_h+h^2 f]_{H^0(\T_h)}+h\norm{z_h}_{H^1(\Omega)}.
        \end{split}
        \end{equation}
        Furthermore, notice that the right-hand side of the first inequality in \eqref{posterior_estimate} can be written as
      \begin{equation}
        h\norm{u-u_h}_{L^2(\omega)}+B(u-u_h)^{1/2}+J(u_h)^{1/2}+[h^2\Delta u_h+h^2 f]_{H^0(\T_h)}+h\norm{z_h}_{H^1(\Omega)}
      \end{equation}
      which is essentially identical to $\trinorm{u-u_h,z_h}$.
      \end{proof}
      Next we use the orthogonality and Lemma \ref{lemma_3} to derive the best approximation in the norm $\trinorm{\cdot}$. Before we present the lemma, we introduce the notation
      \begin{align}
        \norm{u}_{V_h^{-1/2}(\p\Omega)}=\sup_{w\in V_h}\frac{(u,w)_{L^2(\p\Omega)}}{\norm{w}_{H^1(\Omega)}}.
      \end{align}
      \begin{lemma}
        \label{lm:best_approximation}
        Suppose that $u\in H^{1}(\Omega)$ and $(u_h,z_h)\in V_h\times V_h$ solve \eqref{question} and \eqref{equation_g}, respectively. Then there holds
        \begin{equation}
          \label{best_approximation}
          \trinorm{u-u_h,z_h}\lesssim \inf_{\tilde{u}\in V_h}(\trinorm{u-\tilde{u},0}+\norm{h\nabla (u-\tilde{u})}_{L^2(\Omega)}+\norm{h\p_\nu(u-\tilde{u})}_{V_h^{-1/2}(\p\Omega)}).
        \end{equation}
      \end{lemma}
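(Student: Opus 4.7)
Fix an arbitrary $\tilde u \in V_h$ and set $\eta := \tilde u - u$ and $\xi := \tilde u - u_h \in V_h$, so that $u - u_h = \xi - \eta$. The triangle inequality for $\trinorm{\cdot,\cdot}$ gives
\[
\trinorm{u - u_h, z_h} \le \trinorm{\xi, z_h} + \trinorm{\eta, 0},
\]
so the task reduces to controlling $\trinorm{\xi, z_h}$ by quantities depending only on the approximation error $\eta$.

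The form $g$ is jointly bilinear in the pair $(u,z)$ (there are no mixed $u$-$z$ contributions in its definition), so subtracting the consistency identity $g(u, 0, v, w) = F(v,w)$ from the discrete equation $g(u_h, z_h, v, w) = F(v,w)$ yields the Galerkin orthogonality
\[
g(u_h - u, z_h, v, w) = 0 \quad \text{for all } (v, w) \in V_h \times V_h.
\]
Specialising to $(v, w) = (0, z_h)$ one reads off $a(u_h - u, z_h) = S^*(z_h)$. Combining the coercivity identity $\trinorm{\xi, z_h}^2 = g(\xi, z_h, \xi, -z_h)$ of Lemma \ref{lemma_3} with this relation, expanding $g$ and substituting $u_h - u = \eta - \xi$ to eliminate the remaining $a(\xi, z_h)$ term leads to the key identity
\[
\trinorm{\xi, z_h}^2 = a(\eta, z_h) + h^2(\eta, \xi)_{L^2(\omega)} + b(\eta, \xi) + s(\eta, \xi).
\]

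The three cross terms on the right are handled by Cauchy--Schwarz in the obvious manner and are each dominated by $\trinorm{\eta, 0}\trinorm{\xi, z_h}$. For the remaining $a(\eta, z_h)$ I use
\[
a(\eta, z_h) = (h\nabla\eta, h\nabla z_h)_{L^2(\Omega)} - h^2(P\p_\nu\eta, z_h)_{L^2(\p\Omega)}.
\]
The volume piece is bounded directly by $\norm{h\nabla\eta}_{L^2(\Omega)}\norm{h\nabla z_h}_{L^2(\Omega)} \le \norm{h\nabla\eta}_{L^2(\Omega)} S^*(z_h)^{1/2}$. For the boundary piece, since $z_h\in V_h$, the definition of $\norm{\cdot}_{V_h^{-1/2}(\p\Omega)}$ combined with the self-adjointness of the $L^2$-projection $P$ and the finite dimensionality of $\mathcal{V}_N$ yields
\[
|h^2(P\p_\nu\eta, z_h)_{L^2(\p\Omega)}| \lesssim \norm{h\p_\nu\eta}_{V_h^{-1/2}(\p\Omega)} S^*(z_h)^{1/2}.
\]

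Collecting these estimates and using $S^*(z_h)^{1/2} \le \trinorm{\xi, z_h}$ gives $\trinorm{\xi, z_h} \lesssim \trinorm{\eta, 0} + \norm{h\nabla\eta}_{L^2(\Omega)} + \norm{h\p_\nu\eta}_{V_h^{-1/2}(\p\Omega)}$. The triangle inequality from the first paragraph and taking the infimum over $\tilde u \in V_h$ then yield \eqref{best_approximation}. The main technical hurdle is the boundary part of $a(\eta, z_h)$: a direct Cauchy--Schwarz with the trace inequality on $z_h$ loses a factor of $h^{-1/2}$ that cannot be reabsorbed through $S^*(z_h)$ alone, and so one must work with the discrete dual norm $V_h^{-1/2}(\p\Omega)$ tailored to the finite element test space rather than the classical $H^{1/2}(\p\Omega)$ duality.
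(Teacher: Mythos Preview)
Your proof is correct and follows essentially the same route as the paper's: triangle inequality, then the coercivity identity combined with Galerkin orthogonality to reduce $\trinorm{\xi,z_h}^2$ to terms in $\eta$, Cauchy--Schwarz on the symmetric pieces, and the discrete $V_h^{-1/2}$ duality for the boundary contribution in $a(\eta,z_h)$. Your detour through the test pair $(0,z_h)$ to isolate $a(u_h-u,z_h)=S^*(z_h)$ is not actually needed---the key identity drops out directly from $g(u_h-\tilde u,z_h,v,w)=g(u-\tilde u,0,v,w)$ with $(v,w)=(u_h-\tilde u,-z_h)$, which is how the paper proceeds---but the argument is valid as written.
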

      \begin{proof}
        For any $\tilde{u}\in V_h$, we have
        \begin{equation}
          \trinorm{u_h-u,z_h}\leq \trinorm {u_h-\tilde{u},z_h}+\trinorm{\tilde{u}-u,0}.
        \end{equation}
        Applying Lemma \ref{lemma_3} and Galerkin orthogonality \eqref{Galerkin_orthogonality}, we get
        \begin{align}
          \trinorm{u_h-\tilde{u},z_h}&\lesssim \sup_{(v,w)\in V_h\times V_h}\frac{g(u_h-\tilde{u},z_h,v,w)}{\trinorm{v,w}}\\
          &=\sup_{(v,w)\in V_h\times V_h}\frac{g(u-\tilde{u},0,v,w)}{\trinorm{v,w}}.
        \end{align}
        Applying Cauchy-Schwarz inequality to each term in $g(u-\tilde{u},0,v,w)$, we have
        \begin{align}
            g(u-\tilde{u},0,v,w)&=h^2(u-\tilde{u},v)_{L^2(w)}+b(u-\tilde{u},v)+s(u-\tilde{u},v)+a(u-\tilde{u},w)\\
          &\leq \trinorm{u-\tilde{u},0}\cdot \trinorm{v,0}+a(u-\tilde{u},w).
        \end{align}
        Notice that for all $w\in V_h$, there holds
        \begin{align}
          &a(u-\tilde{u},w)=(h\nabla(u-\tilde{u}),h\nabla w)_{L^2(\Omega)}-h^2(P\p_\nu(u-\tilde{u}),w)_{L^2(\p\Omega)}\\
          &\lesssim \norm{h\nabla (u-\tu)}_{L^2(\Omega)}\cdot h\norm{w}_{H^1(\Omega)}+h\norm{P\p_\nu(u-\tilde{u})}_{V_h^{-1/2}(\p\Omega)}\cdot h\norm{w}_{H^1(\Omega)}\\
          &\lesssim \left(\norm{h\nabla (u-\tu)}_{L^2(\Omega)}+\norm{h P\p_\nu(u-\tilde{u})}_{V_h^{-1/2}(\p\Omega)}\right)\trinorm{0,w}.
        \end{align}
        Since $P$ is a projection operator, we have 
        \begin{align}
          \norm{hP \p_\nu(u-\tilde{u})}_{V_h^{-1/2}(\p\Omega)}\lesssim \norm{h \p_\nu(u-\tilde{u})}_{V_h^{-1/2}(\p\Omega)}.
        \end{align}
        Thus
        \begin{align}
        \label{eq:best_approximation_g}
          &g(u-\tilde{u},0,v,w)\lesssim \trinorm{u-\tilde{u},0}\cdot \trinorm{v,0}\\
          &+\left(\norm{h\nabla (u-\tilde{u})}_{L^2(\Omega)}+\norm{h \p_\nu(u-\tilde{u})}_{V_h^{-1/2}(\p\Omega)}\right)\trinorm{0,w}\\
          &\lesssim\left(\trinorm{u-\tilde{u},0}+\norm{h\nabla (u-\tilde{u})}_{L^2(\Omega)}+\norm{h \p_\nu(u-\tilde{u})}_{V_h^{-1/2}(\p\Omega)}\right)\trinorm{v,w}.
        \end{align}
        Hence for any $\tilde{u}\in V_h$,
        \begin{equation}
          \trinorm{u_h-u,z_h}\lesssim \trinorm{u-\tilde{u},0}+\norm{h\nabla (u-\tilde{u})}_{L^2(\Omega)}+\norm{h \p_\nu(u-\tilde{u})}_{V_h^{-1/2}(\p\Omega)},
        \end{equation}
        which implies \eqref{best_approximation}.
      \end{proof}
      Combining Theorem \ref{thm:aposterori} and Lemma \ref{lm:best_approximation}, we can obtain the a prior estimate
      \begin{theorem}
        \label{priori_estimate}
        Suppose that $u\in H^{1}(\Omega)$ and $(u_h,z_h)\in V_h \times V_h$ solves \eqref{question} and \eqref{equation_g}, respectively. Then there holds
        \begin{align}
         h\norm{u-u_h}_{H^1(\Omega)}\lesssim\inf_{\tilde{u}\in V_h}\left(\trinorm{u-\tilde{u},0}+\norm{h\nabla (u-\tilde{u})}_{L^2(\Omega)}+\norm{h \p_\nu(u-\tilde{u})}_{V_h^{-1/2}(\p\Omega)}\right).
        \end{align}
      \end{theorem}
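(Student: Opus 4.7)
The proof is essentially a direct composition of Theorem \ref{thm:aposterori} with Lemma \ref{lm:best_approximation}, and that is the strategy I would follow. Applying Theorem \ref{thm:aposterori} first gives $h\norm{u - u_h}_{H^1(\Omega)} \lesssim \trinorm{u - u_h, z_h}$, so the $H^1$-error on the left-hand side of the desired estimate is controlled by the triple-norm of the discrete error pair. Lemma \ref{lm:best_approximation} then bounds that triple-norm from above by $\inf_{\tilde u \in V_h}\bigl(\trinorm{u - \tilde u, 0} + \norm{h\nabla(u - \tilde u)}_{L^2(\Omega)} + \norm{h\p_\nu (u - \tilde u)}_{V_h^{-1/2}(\p\Omega)}\bigr)$. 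Chaining the two inequalities by transitivity of $\lesssim$ yields the theorem in a single step.

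All the nontrivial work has already been carried out upstream: the Lipschitz stability of Theorem \ref{lipschitz_stability}, the Galerkin orthogonality \eqref{Galerkin_orthogonality}, the dual-norm conversion of Lemma \ref{lemma_laplacian}, the continuity of $a(\cdot,\cdot)$ in Lemma \ref{a_c_estimate}, and the interpolation bound \eqref{interpolation}. Consequently I do not anticipate any obstacle specific to this statement: no additional inequality, interpolation bookkeeping, or discrete manipulation needs to be performed here. The only minor point worth verifying is that the right-hand side is genuinely finite for a generic $u \in H^1(\Omega)$, which follows because the Neumann trace of $u - \tilde u$ enters only through the weak dual norm $\norm{\cdot}_{V_h^{-1/2}(\p\Omega)}$ tested against discrete functions, and because the $\trinorm{u-\tilde u,0}$ contribution involves only elementwise Laplacian and jump quantities that vanish on the $H^2_{\mathrm{loc}}$-part of $u$ and are controlled by $\tilde u$ on the mesh.

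If one wishes to extract a concrete convergence rate from the right-hand side (as needed for Corollary \ref{optimal_rate}), one must still select $\tilde u$ as a suitable Scott--Zhang interpolant and apply \eqref{interpolation} together with the trace inequality \eqref{eq:trace_inequality}, but that rate-extraction step belongs to the subsequent corollary rather than to the proof of the present theorem.
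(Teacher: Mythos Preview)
Your proposal is correct and follows exactly the paper's own argument: the theorem is obtained by chaining Theorem~\ref{thm:aposterori} with Lemma~\ref{lm:best_approximation}. The paper states this combination without further detail, so your write-up is in fact more explicit than the original.
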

      Next, we will show that Theorem \ref{priori_estimate} gives the optimal convergence rate.\par
      %\HOX{Maybe use another notation than $P$ for the patches, since this is the projection?}
      \begin{figure}[h!tbp]
    \centering
    \begin{tikzpicture}[scale=1]
    \node[fill=black,circle,scale=0.3] at (2,0) {};
    \node[fill=black,circle,scale=0.3] at (3,0) {};
    \node[fill=black,circle,scale=0.3] at (4,0) {};
    \node[fill=black,circle,scale=0.3] at (5,0) {};
    \node[fill=black,circle,scale=0.3] at (2.5,0.87) {};
    \node[fill=black,circle,scale=0.5] at (3.5,0.87) {};
    \node[fill=black,circle,scale=0.3] at (4.5,0.87) {};
    \node[fill=black,circle,scale=0.3] at (3,1.73) {};
    \node[fill=black,circle,scale=0.3] at (4,1.73) {};
    \draw[thick] (0,0) -- (7,0);
    \draw[thick] (2,0) -- (3,1.73);
    \draw[thick] (3,0) -- (4,1.73);
    \draw[thick] (4,1.73) -- (5,0);
    \draw[thick] (3,1.73) -- (4,0);
    \draw[thick] (3,1.73) -- (4,1.73);
    \draw[thick] (2.5,0.87) -- (4.5,0.87);
    \draw[thick] (2.5,0.87) -- (3,0);
    \draw[thick] (4.5,0.87) -- (4,0);
    \node[below left] at (3.5,0.87) {$x_T$};
    \node[above] at (3.5,1.73) {$T$};
    \node[below] at (6.5,0) {$\partial \Omega$};
    \draw[decoration={brace,mirror,raise=5pt},decorate] (2,0) -- node[below=6pt] {$F$} (5,0);
\end{tikzpicture}
  \caption{Patch $F$ on $\p\Omega$ together with associated bulk patch $T$. $\varphi_F|_K\in \mathbb{P}_1$ for all $K\in \T_h$, and $\varphi_F(x_T)=\Theta (h)$ and vanishes on other nodes.}  
  \label{fig:patch}
  \end{figure}
      As in \cite[Proposition 3.3]{BLO}, we decompose $\p\Omega$ in disjoint and shape regular patches $\{ F \}$ with diameter $\O(h)$ thus $|F| =\O(h^{d-1})$, and to each of them we associate a bulk patch $T$ that extends $\O(h)$ into $\Omega$ such that $T\cap \p \Omega=F$, and $T\cap T^\prime=\emptyset$ if $T\neq T^\prime$. On each patch $T$ we can construct a function $\varphi_F\in H^1_0(T)$ with $\supp \varphi_F\subset T$ satisfying
      \begin{equation}
        \norm{\p_\nu \varphi_F}_{L^\infty(F)}\leq C_\Omega,
      \end{equation}
      where $C_\Omega$ is a constant that only depends on $\Omega$, and
      \begin{equation}
         \int_F \p_\nu \varphi_F\ dx=|F|.
      \end{equation}
      When $h$ is small enough, we can take $T\cap \omega=\emptyset$ for all $T$ and $\varphi_F\in V_h$ such that $\varphi_F|_K\in \mathbb{P}_1$ for all $K\in \T_h$. A typical patch $F$ and $\varphi_F$ constructed on it is illustrated in Figure \ref{fig:patch}. We refer to \cite[Appendix]{BLO} for the construction in more general case. Then we define an interpolation $\pi_h:\ H^m(\Omega)\to V_h$:
      \begin{equation}
        \pi_h u=i_h u+\sum_{F}\alpha_F(u) \varphi_F,
      \end{equation}
      where $\alpha_F(u)$ is a functional defined as
      \begin{equation}
        \alpha_F(u)=\frac{1}{|F|}\int_F \p_\nu (u-i_h u)\ dx.
      \end{equation}
      Then we have the following lemma
\begin{lemma}
  \label{pi_estimate}
  Suppose $u\in H^{m}(\Omega)$, $m\geq 2$ and the polynomial order $k$ of the finite element space $V_h$ satisfies $k\geq m-1$, then there holds
  \begin{equation}
    h^{1/2}\norm{h\p_\nu(u-\pi_h u)}_{L^2(\p\Omega)}\lesssim [u]_{H^{m}(\Omega)}.
  \end{equation}
\end{lemma}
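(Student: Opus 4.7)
\medskip
\noindent\textbf{Proof proposal.}
The plan is to split $u - \pi_h u$ into the Scott--Zhang error $u - i_h u$ and the boundary correction $\sum_F \alpha_F(u) \varphi_F$, bound each piece by $[u]_{H^m(\Omega)}$, and finally combine via the triangle inequality. The first piece is controlled by a standard trace-plus-interpolation argument; the second piece, despite looking like an additional source of error, turns out to be dominated by the first, because the correction functions $\varphi_F$ have disjoint boundary supports and $\partial_\nu\varphi_F$ is $L^\infty$-bounded by a constant.

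First I would handle the Scott--Zhang part. Applying the trace inequality with scaling \eqref{eq:trace_inequality} to $h\partial_\nu(u - i_h u)$ on a boundary element $K$ gives
\begin{equation*}
h^{1/2}\norm{h\partial_\nu(u-i_h u)}_{L^2(\partial K\cap\partial\Omega)}
\lesssim \norm{h\nabla(u-i_h u)}_{L^2(K)} + \norm{h^2 D^2(u-i_h u)}_{L^2(K)}.
\end{equation*}
The interpolation estimate \eqref{interpolation} (valid since $k\ge m-1$) then bounds the right-hand side by $[u]_{H^m(K)}$. Squaring and summing over boundary elements yields
\begin{equation*}
h^{1/2}\norm{h\partial_\nu(u-i_h u)}_{L^2(\partial\Omega)} \lesssim [u]_{H^m(\T_h)} = [u]_{H^m(\Omega)},
\end{equation*}
where the last equality uses $u\in H^m(\Omega)$.

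Next I would bound the correction term. Because the patches $T$ are disjoint and each $\varphi_F$ is supported in $T$, the functions $\varphi_F|_{\partial\Omega}$ have pairwise disjoint supports, so
\begin{equation*}
\norm[\Big]{h\partial_\nu\sum_F \alpha_F(u)\varphi_F}^2_{L^2(\partial\Omega)}
= \sum_F |\alpha_F(u)|^2 \norm{h\partial_\nu \varphi_F}^2_{L^2(F)}.
\end{equation*}
Using $\norm{\partial_\nu \varphi_F}_{L^\infty(F)}\le C_\Omega$ together with $|F|\lesssim h^{d-1}$ gives $\norm{h\partial_\nu \varphi_F}_{L^2(F)}\lesssim h|F|^{1/2}$, and the Cauchy--Schwarz inequality applied in the definition of $\alpha_F(u)$ gives $|\alpha_F(u)|\lesssim |F|^{-1/2}\norm{\partial_\nu(u-i_h u)}_{L^2(F)}$. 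Combining these two bounds, the factors of $|F|$ cancel and each summand is $\lesssim \norm{h\partial_\nu(u-i_h u)}^2_{L^2(F)}$. Summing over $F$ shows that the correction term is controlled by the already-estimated Scott--Zhang term.

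The triangle inequality then finishes the proof. The only step requiring care is the bookkeeping of the $|F|$ scalings in the correction estimate; once one notices that disjoint supports turn the $L^2(\partial\Omega)$-norm into a pure $\ell^2$-sum and that the $L^\infty$ bound on $\partial_\nu\varphi_F$ makes the correction essentially a localized averaging operator, the argument reduces to the standard interpolation estimate handled in the first paragraph.
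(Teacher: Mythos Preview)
Your proposal is correct and follows essentially the same route as the paper: both split $u-\pi_h u$ into the Scott--Zhang error and the boundary correction, control the correction by the Scott--Zhang term via disjoint supports, the $L^\infty$ bound on $\partial_\nu\varphi_F$, and Cauchy--Schwarz on $\alpha_F$, and then close with the trace inequality and \eqref{interpolation}. The only cosmetic difference is the order of presentation (you estimate the Scott--Zhang piece first, the paper does the correction piece first).
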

\begin{proof}
    By the definition of $\pi_h$, there holds
    \begin{equation}
      h^{3/2}\norm{\p_\nu(u-\pi_h u)}_{L^2(\p\Omega)}\leq h^{1/2}\norm{h\p_\nu (u-i_h u)}_{L^2(\p\Omega)}+h^{3/2}\norm{\sum_F \alpha_F(u)\p_\nu \varphi_F}_{L^2(\p\Omega)}.
    \end{equation}
    Notice that
    \begin{align}
      &\norm{\sum_F \alpha_F(u)\p_\nu \varphi_F}_{L^2(\p\Omega)}^2=\left(\sum_F \alpha_F(u)\p_\nu \varphi_F,\sum_{F^\prime}\alpha_{F^\prime}(u)\p_\nu\varphi_{F^\prime}\right)_{L^2(\p\Omega)}\\
      &=\sum_F\sum_{F^\prime}\alpha_F(u)\alpha_{F^\prime}(u)\left(\p_\nu\varphi_F,\p_\nu\varphi_{F^\prime}\right)_{L^2(\p\Omega)}=\sum_F \alpha_F^2(u)\norm{\p_\nu\varphi_F}_{L^2(F)}^2\\
      &\leq\sum_F \frac{C_\Omega}{|F|^2}\left(\int_F \p_\nu(u-i_h u)\ dx\right)^2\cdot |F|\leq \sum_F \frac{C_\Omega}{|F|}\int_F \mathbf{1}_Fdx\int_F|\p_\nu(u-i_h u)|^2dx\\
      &\lesssim \norm{\p_\nu (u-i_h u)}_{L^2(\p\Omega)}^2.
    \end{align}
    Thus
    \begin{align}
      &h^{3/2}\norm{\p_\nu(u-\pi_h u)}_{L^2(\p\Omega)}\lesssim h^{1/2}\norm{h\p_\nu (u-i_h u)}_{L^2(\p\Omega)}\\
      &\lesssim [u-i_h u]_{H^1(\T_h)}+[u-i_h u]_{H^2(\T_h)}\lesssim \norm{u-i_h u}_{H^m(\T_h)}\lesssim [u]_{H^m(\T_h)}.
    \end{align}
    Here the last inequality follows from the trace inequality and \eqref{interpolation}.
\end{proof}
\begin{proposition}
  \label{h_partial}
  Suppose $u\in H^{m}(\Omega)$, $m\geq 2$ and the polynomial order $k$ of the finite element space $V_h$ satisfies $k\geq m-1$, then there holds
  \begin{equation}
    \label{eq_h_partial}
    \norm{h \p_\nu(u-\pi_h u)}_{V_h^{-1/2}(\p\Omega)}\lesssim [u]_{H^m(\T_h)}.
  \end{equation}
\end{proposition}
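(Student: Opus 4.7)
The plan is to exploit the defining property of $\pi_h$: by construction of $\alpha_F$ and the normalization $\int_F \p_\nu \varphi_F\, dx = |F|$, the function $\p_\nu(u - \pi_h u)$ has vanishing mean on every boundary patch $F$. This zero-mean property lets me subtract a constant from the test function at no cost, so that for any $w \in V_h$, writing $\bar w_F := |F|^{-1}\int_F w\, dx$,
\begin{equation*}
(h\p_\nu(u-\pi_h u), w)_{L^2(\p\Omega)} = \sum_F \int_F h\p_\nu(u-\pi_h u)(w - \bar w_F)\, dx.
\end{equation*}

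First I would apply Cauchy--Schwarz on each patch, reducing matters to estimating two factors: $h\norm{\p_\nu(u-\pi_h u)}_{L^2(F)}$, already controlled globally by Lemma \ref{pi_estimate}, and $\norm{w - \bar w_F}_{L^2(F)}$, which calls for a patch-wise Poincar\'e--trace bound. Second, I would establish
\begin{equation*}
\norm{w - \bar w_F}_{L^2(F)} \lesssim h^{1/2}\norm{\nabla w}_{L^2(T)}
\end{equation*}
by applying the scaled trace inequality \eqref{eq:trace_inequality} to $w - \bar w_T$ on the simplices of the bulk patch $T$ touching $F$, and then using a Poincar\'e inequality on $T$ (diameter $\O(h)$) to absorb the volume term into $h\norm{\nabla w}_{L^2(T)}$; replacing $\bar w_T$ by $\bar w_F$ loses nothing since $\bar w_F$ is the $L^2(F)$-optimal constant.

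Third, I would apply a discrete Cauchy--Schwarz across patches, together with the disjointness of $\{T\}$, which gives $\sum_F \norm{\nabla w}_{L^2(T)}^2 \leq \norm{\nabla w}_{L^2(\Omega)}^2$, so that
\begin{equation*}
(h\p_\nu(u-\pi_h u), w)_{L^2(\p\Omega)} \lesssim h^{1/2}\norm{h\p_\nu(u-\pi_h u)}_{L^2(\p\Omega)}\norm{w}_{H^1(\Omega)} \lesssim [u]_{H^m(\T_h)}\norm{w}_{H^1(\Omega)},
\end{equation*}
the final step being Lemma \ref{pi_estimate}. Dividing by $\norm{w}_{H^1(\Omega)}$ and taking the supremum over $w \in V_h$ yields \eqref{eq_h_partial}. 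I expect the main technical obstacle to be the patch-wise Poincar\'e step: since $T$ need not be a single simplex, some care with shape regularity is needed to keep the Poincar\'e constant uniform in $h$, but once that is in hand the rest is routine.
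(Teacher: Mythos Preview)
Your proposal is correct and follows the same overall architecture as the paper's proof: both exploit the zero-mean property $\int_F \p_\nu(u-\pi_h u)\,dx = 0$ to replace the test function by $w-\bar w_F$ patchwise, then combine a bound of the form $\norm{w-\bar w_F}_{L^2(\p\Omega)}\lesssim h^{1/2}\norm{\nabla w}_{L^2(\Omega)}$ with Lemma~\ref{pi_estimate}.

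The one genuine technical difference is in how that Poincar\'e-type bound is obtained. The paper works entirely on the boundary: it applies a tangential Poincar\'e inequality on each patch $F$ to get $\norm{w-\bar w_F}_{L^2(F)}\lesssim h\norm{\nabla_\partial w}_{L^2(F)}$, then controls $\norm{\nabla w}_{L^2(\p\Omega)}$ via the scaled trace inequality on each element plus the discrete inverse inequality \eqref{eq:inverse} (this is where $w\in V_h$ is actually used). Your route instead lifts to the bulk patch $T$: trace inequality for $w-\bar w_T$ on the simplices of $T$, followed by a volumetric Poincar\'e on $T$. Your version is arguably cleaner in that it avoids the inverse inequality and would work for any $H^1$ test function, not just $w\in V_h$; the paper's version avoids your acknowledged obstacle of verifying a uniform Poincar\'e constant on the composite patch $T$. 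Both are standard and lead to the same estimate.
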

\begin{proof}
  By definition, 
  \begin{align}
    \norm{h \p_\nu(u-\tilde{u})}_{V_h^{-1/2}(\p\Omega)}= \sup_{z\in V_h}\frac{(h\p_\nu (u-u_h),z)_{L^2(\p\Omega)}}{\norm{z}_{H^1(\Omega)}}.
  \end{align}
  By trace inequality, for all $z\in V_h$, there holds $z|_{\p\Omega}\in L^2(\p\Omega)$.
  Then we define an operator $\pi^0_h: L^2(\p\Omega)\to L^2(\p\Omega)$:
        \begin{equation}
          \pi^0_h(z)=\sum_{F}\left(\frac{1}{|F|}\int_F z\ dx\right)\mathbf{1}_F.
        \end{equation}
        Notice 
        \begin{equation}
          \int_{\p\Omega}z-\pi_h^0(z)=0,
        \end{equation}
        Applying the Poincar\'e's inequality on each $F$, we have
        \begin{align}
          &\norm{z-\pi^0_h(z)}_{L^2(\p\Omega)}^2\lesssim  h^2\norm{\nabla_\partial z}^2_{L^2(\p\Omega)}\lesssim h^2\norm{\nabla z}_{L^2(\p\Omega)}^2\lesssim \sum_{K\in \T_h}h^2\norm{\nabla z}^2_{L^2(\p K)}\\
          &\lesssim h\sum_{K\in \T_h}\left(\norm{\nabla z}^2_{L^2(K)}+\norm{h D^2 z}^2_{L^2(K)}\right)\lesssim h\norm{\nabla z}^2_{L^2(\Omega)}.
        \end{align}
        The last inequality follows from the discrete inverse inequality \eqref{eq:inverse}.
        Since 
        \begin{equation}
          \int_F \p_\nu(u-\pi_h u)dx=0
        \end{equation}
        for all $F$, there holds
        \begin{align}
          &(h\p_\nu(u-\pi_h u),z)_{L^2(\p\Omega)}=(h\p_\nu (u-\pi_h u),z-\pi_h^0 (z))_{L^2(\p\Omega)}\\
          &\lesssim h\norm{\p_\nu (u-\pi_h u)}_{L^2(\p\Omega)}\cdot\norm{z-\pi^0_h (z)}_{L^2(\p\Omega)}\lesssim h^{3/2} \norm{\p_\nu (u-\pi_h u)}_{L^2(\p\Omega)}\cdot\norm{z}_{H^1(\Omega)}
        \end{align}
        Then \eqref{eq_h_partial} follows immediately from Lemma \ref{pi_estimate}.
\end{proof}
\begin{proposition}
  \label{nabla_pi}
  Suppose $u\in H^{m}(\Omega)$, $m\geq 2$ and the polynomial order $k$ of the finite element space $V_h$ satisfies $k\geq m-1$, then there holds
  \begin{equation}
    \norm{h\nabla (u-\pi_h u)}_{L^2(\Omega)}\lesssim [u]_{H^m(\T_h)}.
  \end{equation}
\end{proposition}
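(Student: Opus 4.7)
The plan is to split the interpolation error via the triangle inequality
\[
\norm{h\nabla(u-\pi_h u)}_{L^2(\Omega)} \leq \norm{h\nabla(u-i_h u)}_{L^2(\Omega)} + \Bigl\lVert h\nabla\!\sum_F \alpha_F(u)\varphi_F\Bigr\rVert_{L^2(\Omega)}
\]
and handle the two pieces separately. The first term is the standard Scott--Zhang estimate: by \eqref{interpolation} and the ordering of broken seminorms, $\norm{h\nabla(u-i_h u)}_{L^2(\Omega)}=[u-i_h u]_{H^1(\T_h)} \leq \norm{u-i_h u}_{H^m(\T_h)}\lesssim [u]_{H^m(\T_h)}$, which uses $m\geq 2$ only implicitly through $i_h$ being defined on $H^m$.

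For the correction term I would exploit that the bulk patches $\{T\}$ are pairwise disjoint, so the $\varphi_F$ have disjoint supports and
\[
\Bigl\lVert h\nabla\!\sum_F \alpha_F(u)\varphi_F\Bigr\rVert_{L^2(\Omega)}^2 = \sum_F |\alpha_F(u)|^2\,\norm{h\nabla\varphi_F}_{L^2(T)}^2.
\]
The shape regularity of $\{T,F\}$ gives $|T|\lesssim h^d$ and $|F|\lesssim h^{d-1}$, together with $\norm{\nabla\varphi_F}_{L^\infty(T)}\lesssim 1$ (since $\varphi_F$ is piecewise linear and of amplitude $\Theta(h)$ on a patch of diameter $\Theta(h)$). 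Hence $\norm{h\nabla\varphi_F}_{L^2(T)}^2\lesssim h^{d+2}$. Meanwhile Cauchy--Schwarz yields
\[
|\alpha_F(u)|^2 \leq \frac{1}{|F|^2}\cdot|F|\cdot\norm{\partial_\nu(u-i_h u)}_{L^2(F)}^2 \lesssim h^{1-d}\norm{\partial_\nu(u-i_h u)}_{L^2(F)}^2.
\]

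Multiplying and summing over disjoint $F$ collapses the geometric factors to $h^3$:
\[
\sum_F|\alpha_F(u)|^2\norm{h\nabla\varphi_F}_{L^2(T)}^2 \lesssim h^3 \norm{\partial_\nu(u-i_h u)}_{L^2(\p\Omega)}^2.
\]
To close the estimate I would invoke the intermediate bound already carried out in the proof of Lemma \ref{pi_estimate}, namely
\[
h^{1/2}\norm{h\,\partial_\nu(u-i_h u)}_{L^2(\p\Omega)} \lesssim [u-i_h u]_{H^1(\T_h)}+[u-i_h u]_{H^2(\T_h)} \lesssim [u]_{H^m(\T_h)},
\]
which follows from the scaled trace inequality \eqref{eq:trace_inequality} applied element-wise plus \eqref{interpolation} (this is exactly where $m\geq 2$ and $k\geq m-1$ are used). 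Substituting back gives the correction-term bound by $[u]_{H^m(\T_h)}$, and combining with the bound on $\norm{h\nabla(u-i_h u)}_{L^2(\Omega)}$ concludes the proof.

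The only non-routine point is the bookkeeping of the patch scalings: checking that the power $h^{d+2}$ from $\norm{h\nabla\varphi_F}_{L^2(T)}^2$ and the power $h^{1-d}$ from $|F|^{-1}$ combine to exactly $h^3$, which is the factor that turns $\norm{\partial_\nu(u-i_h u)}_{L^2(\p\Omega)}$ into $\norm{h\,\partial_\nu(u-i_h u)}_{L^2(\p\Omega)}$ with an extra $h^{1/2}$ of margin, matching the bound available from Lemma \ref{pi_estimate}'s trace estimate. Everything else is the triangle inequality plus standard interpolation machinery.
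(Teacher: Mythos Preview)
Your proof is correct and follows essentially the same route as the paper: the same triangle-inequality split, the same Cauchy--Schwarz bound on $\alpha_F(u)$, and the same closing step via the scaled trace estimate already used in Lemma~\ref{pi_estimate}. The only cosmetic difference is that the paper quotes $\norm{\nabla\varphi_F}_{L^2(\Omega)}\lesssim h^{d/2}$ from \cite[(3.2)]{BLO}, whereas you recover the equivalent scaling from $\norm{\nabla\varphi_F}_{L^\infty(T)}\lesssim 1$ and $|T|\lesssim h^d$.
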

\begin{proof}
  By definition
  \begin{equation}
    \norm{h\nabla(u-\pi_h u)}_{L^2(\Omega)}\leq \norm{h\nabla(u-i_h u)}_{L^2(\Omega)}+\norm{h \sum_{F}\alpha_F(u)\nabla\varphi_F}_{L^2(\Omega)}.
  \end{equation}
  Notice
  \begin{equation}
    \norm{h\nabla(u-i_h u)}_{L^2(\Omega)}\leq \norm{u-i_h u}_{H^2(\T_h)}\lesssim [u]_{H^2(\T_h)}.
  \end{equation}
  According to \cite[(3.2)]{BLO}, there holds
  \begin{equation}
    \norm{\nabla \varphi_F}_{L^2(\Omega)}\lesssim h^{\frac{d}{2}}.
  \end{equation}
  Thus
  \begin{align}
    &\norm{\sum_F \alpha_F(u) \nabla \varphi_F}_{L^2(\Omega)}^2\lesssim \sum_{F} \alpha_F^2(u)\norm{\nabla \varphi_F}_{L^2(\Omega)}^2\lesssim \sum_F h^d \alpha_F^2(u)\\
    &\lesssim h^d\sum_F \frac{1}{|F|^2}\left(\int_F \p_\nu(u-i_h u)\ dx\right)^2\lesssim h^d \sum_F \frac{1}{|F|} \int_F |\p_\nu(u-i_h u)|^2\ dx\\
    &\lesssim h^d\cdot h^{1-d} \sum_F \int_F|\p_\nu(u-i_h u)|^2\ dx\lesssim h\norm{\p_\nu (u-i_h u)}_{L^2(\p\Omega)}^2.
  \end{align}
  Applying Lemma \ref{pi_estimate}, we have
  \begin{align}
    &h\norm{\sum_F \alpha_F(u)\nabla\varphi_F}_{L^2(\Omega)}\lesssim h^{\frac{1}{2}}\norm{h\p_\nu(u-i_h u)}_{L^2(\p\Omega)}\lesssim [u]_{H^m(\T_h)}.
  \end{align}
\end{proof}
\begin{proposition}
  \label{trinorm}
  Suppose $u\in H^{m}(\Omega)$, $m\geq 2$ and the polynomial order $k$ of the finite element space $V_h$ satisfies $k\geq m-1$, then there holds
  \begin{equation}
    \trinorm{u-\pi_h u,0}\lesssim [u]_{H^m(\T_h)}.
  \end{equation}
\end{proposition}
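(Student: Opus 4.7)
The plan is to unpack
\[
\trinorm{u-\pi_h u,0}^2 = B(u-\pi_h u) + h^2\norm{u-\pi_h u}^2_{L^2(\omega)} + J(u-\pi_h u) + [h^2\Delta(u-\pi_h u)]^2_{H^0(\T_h)}
\]
and bound each of the four summands by $[u]^2_{H^m(\T_h)}$ separately.

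Three of the four pieces reduce quickly to results already established. For the boundary term, since $Q=1-P$ is a projection and hence a contraction on $L^2(\p\Omega)$, one has $B(u-\pi_h u)^{1/2} \leq h^{1/2}\norm{h\p_\nu(u-\pi_h u)}_{L^2(\p\Omega)}$, and Lemma \ref{pi_estimate} controls the right-hand side. For the $L^2(\omega)$ term I would exploit the geometric fact emphasised above Lemma \ref{pi_estimate}: each bubble support $T$ is disjoint from $\omega$ once $h$ is small, so $\pi_h u|_\omega = i_h u|_\omega$; then \eqref{interpolation} gives $h\norm{u-i_h u}_{L^2(\omega)} \lesssim h\norm{u-i_h u}_{H^m(\T_h)} \lesssim h[u]_{H^m(\T_h)} \lesssim [u]_{H^m(\T_h)}$. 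For the Laplacian term, the key observation is that $\varphi_F|_K\in\mathbb{P}_1$ forces $D^2\varphi_F|_K = 0$ on every element, so $D^2\pi_h u|_K = D^2 i_h u|_K$; hence when $m\geq 2$,
\[
[h^2\Delta(u-\pi_h u)]_{H^0(\T_h)} \leq [h^2 D^2(u-i_h u)]_{H^0(\T_h)} = [u-i_h u]_{H^2(\T_h)} \lesssim [u]_{H^m(\T_h)}
\]
via \eqref{interpolation}.

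The main obstacle is the jump term $J(u-\pi_h u)$, since $\nabla\pi_h u$ is not single-valued across interior faces. I would apply the triangle inequality to each jump and then the scaled trace inequality \eqref{eq:trace_inequality} to $h\nabla(u-\pi_h u)$ on each element, yielding
\[
J(u-\pi_h u) \lesssim \sum_{K\in\T_h}\bigl(\norm{h\nabla(u-\pi_h u)}^2_{L^2(K)} + \norm{h^2 D^2(u-\pi_h u)}^2_{L^2(K)}\bigr),
\]
where the Hessian is taken elementwise (well-defined because $\pi_h u$ is polynomial on each $K$). The first sum is precisely what Proposition \ref{nabla_pi} controls, and the second is dispatched by the same $D^2\pi_h u|_K = D^2 i_h u|_K$ reduction used for the Laplacian term. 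Assembling the four estimates yields the claim.
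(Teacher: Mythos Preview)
Your proposal is correct and follows essentially the same route as the paper: both arguments exploit $T\cap\omega=\emptyset$ to reduce the $L^2(\omega)$ term to $i_h$, the fact that $\varphi_F|_K\in\mathbb{P}_1$ (so $D^2\pi_h u|_K=D^2 i_h u|_K$) for the second-order terms, and Proposition~\ref{nabla_pi} for the gradient contribution after applying the scaled trace inequality. The only cosmetic differences are that the paper bundles $J$ and the Laplacian term together as $S$ and bounds $S(u-\pi_h u)^{1/2}\lesssim [u-\pi_h u]_{H^1(\T_h)}+[u-\pi_h u]_{H^2(\T_h)}$ in one step, and for $B$ it re-derives the trace bound rather than citing Lemma~\ref{pi_estimate} directly as you do.
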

\begin{proof}
  By definition,
  \begin{equation}
    \trinorm{u-\pi_h u,0}\lesssim B(u-\pi_h u)^{1/2}+h\norm{u-\pi_h u}_{L^2(\omega)}+S(u-\pi_h u)^{1/2}.
  \end{equation}
  Since for the patch $T$ introduced before Lemma \ref{pi_estimate} there holds $T\cap \omega=\emptyset$,
  \begin{equation}
    \norm{u-\pi_h u}_{L^2(\omega)}=\norm{u-i_h u}_{L^2(\omega)}\lesssim \norm{u-i_h u}_{H^m(\T_h)}\lesssim [u]_{H^m(\T_h)}.
  \end{equation}
  And there holds
  \begin{align}
    B(u-\pi_h u)^{1/2}&=h^{\frac{1}{2}}\norm{hQ\p_\nu(u-\pi_h u)}_{L^2(\p\Omega)}\lesssim h^{\frac{1}{2}}\norm{h\p_\nu (u-\pi_h u)}_{L^2(\p\Omega)}\\
    &\leq[u-\pi_h u]_{H^1(\T_h)}+[u-\pi_h u]_{H^2(\T_h)}.
  \end{align}
  Notice $D^2 \varphi_F=0$, then there holds
  \begin{equation}
    \label{eq:H2}
    [u-\pi_h u]_{H^2(\T_h)}=[u-i_h u]_{H^2(\T_h)}.
  \end{equation}
  Applying Proposition \ref{nabla_pi}, we have
  \begin{equation}
    B(u-\pi_h u)^{1/2}\lesssim [u]_{H^m(\T_h)}+\norm{u-i_h u}_{H^m(\T_h)}\lesssim [u]_{H^m(\T_h)}.
  \end{equation}
  Finally, according to \eqref{eq:trace_inequality}, \eqref{eq:H2} and Proposition \ref{nabla_pi}, 
  \begin{align}
    S(u-\pi_h u)^{1/2}&\lesssim [u-\pi_h u]_{H^1(\T_h)}+[u-\pi_h u]_{H^2(\T_h)}\\
    &\lesssim \norm{u-i_h u}_{H^m(\T_h)}\lesssim [u]_{H^m(\T_h)}.
  \end{align}
\end{proof}
%\HOX{I think that it would greatly enhance the paper if we can prove an error estimate on the normal derivative as well, i.e. $\|\partial_n (u - u_h)\|_{H^{-\frac12}(\partial \Omega)}$. In the case of the Dirichlet trace $\|u - u_h\|_{H^{\frac12}(\partial \Omega)}$ such an estimate is immediate by the trace inequality and Corollary \ref{optimal_rate}. Neumann seems a bit trickier, but it should be feasible.}
%\HOX{We need to show how perturbations in data enter the analysis. I think that it will be similar as in the Dirichlet case, but in my opinion we still need to include it for completeness.}
Combining Theorem \ref{priori_estimate} and the above three propositions, we can conclude the following corollary
\begin{corollary}
  \label{optimal_rate}
  Suppose $u\in H^{m}(\Omega)$ and $(u_h,z_h)\in V_h\times V_h$ solves \eqref{question} and \eqref{equation_g}, respectively. Here $m\geq 2$ and the polynomial order $k$ of the finite element space $V_h$ satisfies $k\geq m-1$. Then there holds
  \begin{equation}
    \norm{u-u_h}_{H^1(\Omega)}\lesssim h^{m-1}\norm{D^{m}u}_{L^2(\Omega)}.
  \end{equation}  
\end{corollary}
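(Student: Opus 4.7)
The plan is to combine Theorem \ref{priori_estimate} with the three approximation propositions by choosing the test function $\tilde u \in V_h$ to be the specially-constructed interpolant $\pi_h u$ introduced just before Lemma \ref{pi_estimate}. The whole proof should be a matter of tying these ingredients together, with essentially no new analytic work required.

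First, I would invoke Theorem \ref{priori_estimate}, which yields
\begin{equation*}
h\norm{u-u_h}_{H^1(\Omega)}\lesssim \trinorm{u-\tilde{u},0}+\norm{h\nabla (u-\tilde{u})}_{L^2(\Omega)}+\norm{h \p_\nu(u-\tilde{u})}_{V_h^{-1/2}(\p\Omega)}
\end{equation*}
for any $\tilde u \in V_h$. Since $m\geq 2$ and $k\geq m-1$, the hypotheses of Propositions \ref{h_partial}, \ref{nabla_pi}, and \ref{trinorm} all apply to $\pi_h u$.

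Next, I would specialize to $\tilde u = \pi_h u$ and invoke each of the three propositions in turn. Proposition \ref{trinorm} bounds the first term by $[u]_{H^m(\T_h)}$, Proposition \ref{nabla_pi} bounds the second term by $[u]_{H^m(\T_h)}$, and Proposition \ref{h_partial} bounds the third term by $[u]_{H^m(\T_h)}$. Summing these three estimates gives
\begin{equation*}
h\norm{u-u_h}_{H^1(\Omega)}\lesssim [u]_{H^m(\T_h)}.
\end{equation*}

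Finally, I would unfold the definition of the broken semiclassical seminorm, noting
\begin{equation*}
[u]_{H^m(\T_h)}^2 = \sum_{K\in\T_h}\norm{(hD)^m u}_{L^2(K)}^2 = h^{2m}\norm{D^m u}_{L^2(\Omega)}^2,
\end{equation*}
so that $[u]_{H^m(\T_h)} = h^m \|D^m u\|_{L^2(\Omega)}$. Dividing through by $h$ yields the advertised bound $\norm{u-u_h}_{H^1(\Omega)}\lesssim h^{m-1}\norm{D^{m}u}_{L^2(\Omega)}$. There is no genuine obstacle here; the three preceding propositions already bear all the analytic weight, so the corollary is purely an assembly step.
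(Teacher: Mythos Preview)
Your proposal is correct and follows exactly the approach the paper takes: the paper states the corollary as an immediate consequence of combining Theorem \ref{priori_estimate} with Propositions \ref{h_partial}, \ref{nabla_pi}, and \ref{trinorm}, and does not spell out any further details. Your unpacking of the broken semiclassical seminorm $[u]_{H^m(\T_h)} = h^m\|D^m u\|_{L^2(\Omega)}$ and the final division by $h$ are precisely the assembly steps the paper leaves implicit.
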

\section{Perturbation analysis}
Consider the finite element method with perturbation $q_\delta\in L^2(\omega)$,
\begin{equation}
  \label{eq:perturbed_g}
  \begin{split}
    g(u,z,v,w)&=h^2(f,w)_{L^2(\Omega)}+h^2(\beta,w)_{L^2(\p\Omega)}+h^2(q+q_\delta,v)_{L^2(\omega)}\\
    &+b(\beta,v)+(h^2f,h^2\Delta v)_{H^0(\T_h)}.
  \end{split}
\end{equation}
where $\norm{q_\delta}_{L^2(\omega)}\leq \delta$. We have the following theorem.
\begin{theorem}
  Suppose $\Omega$ is convex, $u\in H^{2}(\Omega)$ satisfies
  \begin{equation}
    \begin{cases}
      -\Delta u=f\text{ in }\Omega,\\
      u=q\text{ in }\omega,
    \end{cases}
  \end{equation}
  and there exists $p\in \mathcal{V}_N+\beta$ such that $\norm{\p_\nu u-p}_{H^{1/2}(\p\Omega)}\leq \delta$. Let $(u_h,z_h)\in V_h\times V_h$ be the solution of \eqref{eq:perturbed_g} with $k=1$ in $V_h$. Then there holds
  \begin{equation}
    \label{eq:estimate_perturbation}
    \norm{u-u_h}_{H^1(\Omega)}\lesssim h\norm{D^2 u}_{L^2(\Omega)}+\delta.
  \end{equation}
\end{theorem}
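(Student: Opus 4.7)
The plan is to work with the splitting $u - u_h = (u - \pi_h u) + (\pi_h u - u_h)$, where $\pi_h$ is the interpolator introduced before Lemma~\ref{pi_estimate}. The interpolation piece is under immediate control: Propositions~\ref{h_partial}, \ref{nabla_pi}, and \ref{trinorm} applied with $m = 2$, $k=1$ give $\trinorm{u - \pi_h u, 0} + \norm{h\nabla(u - \pi_h u)}_{L^2(\Omega)} + \norm{h\p_\nu(u - \pi_h u)}_{V_h^{-1/2}(\p\Omega)} \lesssim h^2 \norm{D^2 u}_{L^2(\Omega)}$, and in particular $\norm{u - \pi_h u}_{H^1(\Omega)} \lesssim h\norm{D^2 u}_{L^2(\Omega)}$. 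For the discrete residual $\pi_h u - u_h$, I will use the inf-sup stability of Lemma~\ref{lemma_3} together with a modified Galerkin orthogonality that accounts for the two inconsistencies in this setting: the noisy data $q_\delta$ and the fact that $\p_\nu u \in \mathcal V_N + \beta$ only up to $\delta$.

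Set $\eta := Q(\p_\nu u - p) \in L^2(\p\Omega)$. Since $\mathcal V_N$ is finite dimensional, $P$ and $Q$ are bounded on $H^{1/2}(\p\Omega)$, whence $\norm{\eta}_{L^2(\p\Omega)} \lesssim \delta$; writing $p = g+\beta$ with $g\in\mathcal V_N$ we have $Q\p_\nu u = \beta + \eta$. Computing $g(u, 0, v, w)$ using $u|_\omega = q$, $-\Delta u = f$, and $J(u)=0$ (from $u\in H^2(\Omega)$), then comparing with the right-hand side of \eqref{eq:perturbed_g}, yields the perturbed orthogonality
\begin{equation}
g(u_h - u, z_h, v, w) = h^2(q_\delta, v)_{L^2(\omega)} - h^3(\eta, Q\p_\nu v)_{L^2(\p\Omega)} - h^2(\eta, w)_{L^2(\p\Omega)}.
\end{equation}
Decompose $g(\pi_h u - u_h, z_h, v, w) = g(\pi_h u - u, 0, v, w) + g(u - u_h, z_h, v, w)$. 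The first term is bounded by $h^2\norm{D^2 u}_{L^2}\trinorm{v, w}$ by \eqref{eq:best_approximation_g} together with the three propositions above. For the three perturbation terms, Cauchy--Schwarz combined with $h\norm{v}_{L^2(\omega)} \leq \trinorm{v, 0}$, $h^{3/2}\norm{Q\p_\nu v}_{L^2(\p\Omega)} = B(v)^{1/2} \leq \trinorm{v, 0}$, and the continuous trace inequality $\norm{w}_{L^2(\p\Omega)} \lesssim \norm{w}_{H^1(\Omega)} = h^{-1}S^*(w)^{1/2}$, each yields a bound of order $h\delta\trinorm{v, w}$. Lemma~\ref{lemma_3} then gives $\trinorm{\pi_h u - u_h, z_h} \lesssim h^2\norm{D^2 u}_{L^2} + h\delta$, and the triangle inequality promotes this to $\trinorm{u - u_h, z_h} \lesssim h^2\norm{D^2 u}_{L^2} + h\delta$.

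To conclude, I apply Theorem~\ref{lipschitz_stability} to $u - u_h$. The data-fitting term is handled by inserting $\pm\pi_h u$ and using $\norm{\pi_h u - u_h}_{L^2(\omega)} \leq h^{-1}\trinorm{\pi_h u - u_h, z_h}$, giving $\norm{u - u_h}_{L^2(\omega)} \lesssim h\norm{D^2 u}_{L^2} + \delta$. For the dual norm $\norm{L^q_{u - u_h}}_{(H^1(\Omega))^*}$, I rerun the proof of Lemma~\ref{lemma_laplacian}: testing the modified orthogonality at $(0, i_h w)$ produces an additional contribution $h^2(\eta, i_h w)_{L^2(\p\Omega)}$, bounded by $\lesssim h^2\delta\norm{w}_{H^1(\Omega)}$ via the trace inequality and the stability of $i_h$. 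Dividing through by $h^2$ adds an $O(\delta)$ term, so overall $\norm{L^q_{u - u_h}}_{(H^1)^*} \lesssim h^{-1}\trinorm{u - u_h, z_h} + \delta \lesssim h\norm{D^2 u}_{L^2} + \delta$. Substituting both bounds into Theorem~\ref{lipschitz_stability} delivers \eqref{eq:estimate_perturbation}.

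The main technical point will be arranging that the boundary perturbation $\eta$ enters the final $H^1$-estimate at scale $O(\delta)$ rather than $O(h^{-1}\delta)$: this works because $h^2(\eta, \cdot)_{L^2(\p\Omega)}$ carries two extra factors of $h$ over what is needed to absorb into $\trinorm{\cdot,\cdot}$, exactly compensating the $h^{-2}$ loss incurred when converting $a(u-u_h, \cdot)$ back into $L^q_{u-u_h}$.
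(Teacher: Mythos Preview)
Your proof is correct and takes a genuinely different route from the paper. The paper introduces an auxiliary continuous solution $u^*$ with exact Neumann data $p \in \mathcal V_N + \beta$, so that $u^\delta := u - u^*$ is harmonic with small Neumann trace; it then invokes $H^2$ elliptic regularity on the convex domain to obtain $\|u^\delta\|_{H^2(\Omega)} \lesssim \delta$, applies the unperturbed Corollary~\ref{optimal_rate} to $u^*$, and finally compares $u_h$ with the fictitious discrete solution $u_h^*$ through a Strang-type argument based on Lemma~\ref{lemma_3}. You bypass the auxiliary problem entirely by tracking the two consistency defects $q_\delta$ and $\eta = Q(\p_\nu u - p)$ directly through a perturbed Galerkin orthogonality, then push them through the inf--sup estimate and a lightly modified Lemma~\ref{lemma_laplacian}.

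Your approach is more elementary in that it uses neither the convexity of $\Omega$ nor $H^2$ elliptic regularity, and in fact only requires $\|\p_\nu u - p\|_{L^2(\p\Omega)} \leq \delta$ rather than the stronger $H^{1/2}$ bound assumed in the statement. The paper's route is more modular---it reuses Corollary~\ref{optimal_rate} as a black box instead of reopening Lemma~\ref{lemma_laplacian}---and its use of regularity makes transparent why the $H^{1/2}$ hypothesis is natural if one insists on controlling $\|D^2 u^*\|_{L^2(\Omega)}$ by $\|D^2 u\|_{L^2(\Omega)}$ up to $O(\delta)$.
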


\begin{proof}
  Let $u^*\in H^1(\Omega)$ solve 
  \begin{equation}
    \begin{cases}
      -\Delta u^*=f\text{ in }\Omega,\\
      \p_\nu u^* =p\text{ on }\p\Omega,\\
      \int_{\omega}u^*\ dx=\int_{\omega}u\ dx,
    \end{cases}
  \end{equation}
  and $u^\delta=u-u^*$. Notice that $u^\delta\in H_\omega^1(\p\Omega)$ and satisfies
  \begin{equation}
    \begin{cases}
      -\Delta u^\delta=0\text{ in }\Omega,\\
      \p_\nu u^\delta=\p_\nu u-p\text{ on }\p\Omega.
    \end{cases}
  \end{equation}
  Denote $\p_\nu u-p$ by $p_\delta$. According to the elliptic regularity \cite[Corollary 2.2.2.6]{Grisvard_Ell} and Lemma \ref{lemma_1}, there holds
  \begin{align}
    \norm{u^\delta}_{H^2(\Omega)}\lesssim \norm{u^\delta}_{L^2(\Omega)}+\norm{p_\delta}_{H^{1/2}(\p\Omega)}\lesssim \norm{q_\delta}_{L^2(\omega)}+\norm{p_\delta}_{H^{1/2}(\p\Omega)}\lesssim \delta.
  \end{align}
  Since $\p_\nu u^*\in\mathcal{V}_N+\beta$, we can obtain the finite element approximation $(u_h^*,z_h^*)$ of $u^*$ by the method \eqref{equation_g}. Then Corollary \ref{optimal_rate} says that
  \begin{align}
    \norm{u^*-u^*_h}_{H^1(\Omega)}&\lesssim h\norm{D^2 u^*}_{L^2(\Omega)}\lesssim h\norm{D^2 u}_{L^2(\Omega)}+h\norm{D^2 u^\delta}_{L^2(\Omega)}\\
    &\lesssim h\norm{D^2 u}_{L^2(\Omega)}+h\delta.
  \end{align}
  and 
  \begin{align}
    \norm{u-u_h}_{H^1(\Omega)}&\leq \norm{u^\delta}_{H^1(\Omega)}+\norm{u^*-u^*_h}_{H^1(\Omega)}+\norm{u^*_h-u_h}_{H^1(\Omega)}\\
    &\lesssim \delta + h\norm{D^2 u}_{L^2(\Omega)}+\norm{u^*_h-u_h}_{H^1(\Omega)}.
  \end{align}
  Notice that
  \begin{equation}
    g(u_h-u_h^*,z_h-z_h^*,v,w)=h^2(q_\delta+u^\delta, v)_{L^2(\omega)}.
  \end{equation}
  By Lemma \ref{lemma_3}, we have
  \begin{align}
    &\trinorm{u_h-u_h^*,z_h-z_h^*}\lesssim \sup_{(v,w)\in V_h}\frac{g(u_h-u_h^*,z_h-z_h^*,v,w)}{\trinorm{v,w}}\\
    &\lesssim h\sup_{(v,w)\in V_h}\frac{(q_\delta+u^\delta, v)_{L^2(\omega)}}{\norm{v}_{L^2(\omega)}}\lesssim h(\norm{q_\delta}_{L^2(\omega)}+\norm{u^\delta}_{L^2(\omega)})\lesssim h\delta.
  \end{align}
  Combining Theorem \ref{lipschitz_stability} and Lemma \ref{a_c_estimate}, there holds
  \begin{align}
    \norm{u_h-u_h^*}_{H^1(\Omega)}\lesssim h^{-1}\trinorm{u_h-u_h^*,0}\lesssim \delta,
  \end{align}
  which concludes \eqref{eq:estimate_perturbation}
\end{proof}

\section{The approximation of flux}
In this section, we will propose a modified method which can be used to obtain a finite element approximation that possesses a convergence rate of the flux in natural norm on the boundary.\par
Firstly we introduce the Lagrangian $\widetilde {\mathcal{L}}$ on $V_h\times V_h \times V_h$
\begin{align}
  \widetilde {\mathcal{L}}(u,z,r)&:=\mathcal{L}(u,z)+\ta(u,r)-h^2(f,r)_{L^2(\Omega)}-\frac{1}{2}S^*(r),
\end{align}
where
\begin{equation}
  \ta(u,r)=(h\nabla u,h\nabla r)_{L^2(\Omega)}-h\int_{\p\Omega}h (\p_\nu u)r\ dx.
\end{equation}
Analogous to Lemma \ref{a_c_estimate}, there holds
\begin{equation}
  \label{eq:tri_ta}
  \ta(u,r)\lesssim S(u)^{1/2}\norm{r}_{H^1(\T_h)}.
\end{equation}
Finding saddle points of the Lagrangian $\widetilde{\mathcal{L}}$ amounts to finding $(u,z,r)\in V_h\times V_h \times V_h$ such that for all $(v,w,t)\in V_h\times V_h \times V_h$, there holds
      \begin{align}
        \label{equation_tg}
        \tg(u,z,r,v,w,t)&=h^2(f,w)_{L^2(\Omega)}+h^2(\beta,w)_{L^2(\p\Omega)}+h^2(f,t)_{L^2(\Omega)}\\
        &+h^2(q,v)_{L^2(\omega)}+h^3(Q \p_\nu v, \beta)_{L^2(\p\Omega)}+(h^2f,h^2\Delta v)_{H^0(\T_h)},
      \end{align}
      where the bilinear form $\tg$ is defined by
      \begin{align}
        \label{eq:def_tg}
        \tg(u,z,r,v,w,t)&:=g(u,z,v,w)+\ta(v,r)+\ta(u,t)-s^*(r,t).
      \end{align}
Similarly, if $u\in H^{1}(\Omega)$ solves \eqref{question}, then $(u,0,0)$ solves \eqref{equation_tg} for all $(v,w,t)\in V_h\times V_h\times V_h$. Thus the system \eqref{equation_tg} is consistent. Furthermore, since 
\begin{equation}
  \trinorm{u,z}+S^*(r)=\tg(u,z,r,u,-z,-r),
\end{equation}
there holds
\begin{equation}
  \label{eq:tri_uniqueness}
  \trinorm{u,z}+S^*(r)^{1/2}\lesssim \sup_{(v,w,t)\in V_h\times V_h\times V_h}\frac{\tg(u,z,r,v,w,t)}{\trinorm{v,w}+S^*(t)^{1/2}}.
\end{equation}
Then it is inferable that \eqref{equation_tg} admits a unique solution. Denoting the unique solution by $(\tu_h,\tz_h,r_h)$ and letting $u$ solve \eqref{question}, we have the Galerkin orthogonality
\begin{equation}
  \label{tri_Galerkin_orthogonality}
  \tg(\tu_h-u,\tz_h,r_h,v,w,t)=0,\ (v,w,t)\in V_h\times V_h\times V_h.
\end{equation}
Replacing $u_h$ by $\tu_h$ and $(0,i_h w)$ by $(0,i_h w,0)$ in the proof for Lemma \ref{lemma_laplacian}, we can prove that
\begin{equation}
  \label{eq:tri_laplacian}
  h^{-1}\norm{h^2 L^q_{u-\tu_h}}_{(H^1(\Omega))^*}\lesssim S(u-\tu_h)^{1/2}+B(u-\tu_h)^{1/2}+S^*(\tz_h)^{1/2}.
\end{equation}
Then we can obtain a posteriori error estimate of the flux in natural norm on the boundary.
  \begin{theorem}
    \label{flux_posteriori}
      Suppose that $u\in H^{1}(\Omega)$ and $(\tu_h,\tz_h,r_h)\in V_h$ solves \eqref{question} and \eqref{equation_tg}, respectively. Then there holds
    \begin{equation}
      \label{eq:posteriori_bd}
          \begin{split}
            h\norm{u-\tu_h}_{H^1(\Omega)}&+h\norm{\p_\nu (u-\tu_h)}_{H^{-1/2}(\p\Omega)}\lesssim h\norm{\tu_h-q}_{L^2(\omega)}+h^{3/2}\norm{Q\p_\nu \tu_h-\beta}_{L^2(\p\Omega)}\\
          &+J(\tu_h)^{1/2}+[h^2\Delta \tu_h+h^2 f]_{H^0(\T_h)}+h\norm{\tz_h}_{H^1(\Omega)}+h\norm{r_h}_{H^1(\Omega)}.
          \end{split}
    \end{equation}
  \end{theorem}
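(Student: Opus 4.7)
The plan is to combine the a posteriori machinery of Theorem \ref{thm:aposterori} with a new duality argument for the boundary flux that exploits the third Lagrange multiplier $r_h$ introduced in \eqref{eq:def_tg}. I would split the proof accordingly into two parts.

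\textbf{Part 1 ($H^1$-estimate).} First I would argue exactly as in Theorem \ref{thm:aposterori}, using the fact that the stability Theorem \ref{lipschitz_stability} is formulated for arbitrary $u-\tu_h\in H^1(\Omega)$ with $L^2$ normal trace. It gives
$$h\|u-\tu_h\|_{H^1(\Omega)} \lesssim h\|\tu_h - q\|_{L^2(\omega)} + h\|L^q_{u-\tu_h}\|_{(H^1(\Omega))^*},$$
and the modified Laplacian bound \eqref{eq:tri_laplacian} replaces the second term by $S(u-\tu_h)^{1/2}+B(u-\tu_h)^{1/2}+S^*(\tz_h)^{1/2}$. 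Using $J(u)=0$, $-\Delta u=f$, $Q\p_\nu u=\beta$ expands these quantities into the summands on the right-hand side of \eqref{eq:posteriori_bd} that do not involve $r_h$.

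\textbf{Part 2 (flux estimate).} I would bound $h\|\p_\nu(u-\tu_h)\|_{H^{-1/2}(\p\Omega)}$ by duality. Given $\varphi\in H^{1/2}(\p\Omega)$, pick a continuous lift $\phi\in H^1(\Omega)$ with $\phi|_{\p\Omega}=\varphi$ and $\|\phi\|_{H^1(\Omega)}\lesssim \|\varphi\|_{H^{1/2}(\p\Omega)}$. The very definition of $\ta$ produces the identity
$$h^2\int_{\p\Omega}\p_\nu(u-\tu_h)\,\varphi\,dx = h^2(\nabla(u-\tu_h),\nabla\phi)_{L^2(\Omega)} - \ta(u-\tu_h,\phi).$$
The first term is dominated by $h\cdot h\|u-\tu_h\|_{H^1(\Omega)}\|\phi\|_{H^1(\Omega)}$, which is already controlled by Part 1. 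For the second term I split $\phi=i_h\phi+(\phi-i_h\phi)$ with the Scott-Zhang interpolator $i_h$. The Galerkin orthogonality \eqref{tri_Galerkin_orthogonality} applied to the tuple $(v,w,t)=(0,0,i_h\phi)$ collapses (all $g$-contributions vanish) to $\ta(\tu_h-u,i_h\phi)=s^*(r_h,i_h\phi)$, giving
$$|\ta(u-\tu_h,i_h\phi)| \leq S^*(r_h)^{1/2} S^*(i_h\phi)^{1/2} \lesssim h^2\|r_h\|_{H^1(\Omega)}\|\phi\|_{H^1(\Omega)},$$
via the $H^1$-stability of $i_h$. The non-conforming remainder is handled by \eqref{eq:tri_ta} together with the standard estimate $\|\phi-i_h\phi\|_{H^1(\T_h)}\lesssim h\|\phi\|_{H^1(\Omega)}$, yielding
$$|\ta(u-\tu_h,\phi-i_h\phi)| \lesssim h\, S(u-\tu_h)^{1/2}\|\phi\|_{H^1(\Omega)}.$$
Dividing through by $h$ and taking the supremum over $\varphi$ of unit $H^{1/2}$-norm produces
$$h\|\p_\nu(u-\tu_h)\|_{H^{-1/2}(\p\Omega)} \lesssim h\|u-\tu_h\|_{H^1(\Omega)} + S(u-\tu_h)^{1/2} + h\|r_h\|_{H^1(\Omega)},$$
and the three summands on the right fit inside the right-hand side of \eqref{eq:posteriori_bd} after the same reductions used in Part 1.

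The main subtlety I anticipate is verifying that $\ta(u-\tu_h,\phi)$ is well-defined and that \eqref{eq:tri_ta} is applicable when $\phi$ is merely $H^1(\Omega)$: one needs $\p_\nu u\in L^2(\p\Omega)$, which is granted by the hypothesis $\p_\nu u\in\mathcal{V}_N+\beta\subset L^2(\p\Omega)$, and $\Delta u\in L^2(\Omega)$ from $-\Delta u=f$, while the interior jumps $\llbracket\p_\nu u\rrbracket$ vanish by $u\in H^2_{\mathrm{loc}}(\Omega)$. Beyond this regularity accounting, the proof is largely a careful rerun of the a posteriori argument for the extended Lagrangian.
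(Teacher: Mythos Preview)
Your proposal is correct and follows essentially the same route as the paper: the paper also lifts $H^{1/2}(\p\Omega)$ test data to $H^1(\Omega)$, rewrites the boundary pairing via the identity $h^2(\p_\nu(u-\tu_h),t)_{L^2(\p\Omega)}=(h\nabla(u-\tu_h),h\nabla t)_{L^2(\Omega)}-\ta(u-\tu_h,t)$, splits $t=i_h t+(t-i_h t)$, invokes the Galerkin orthogonality \eqref{tri_Galerkin_orthogonality} with $(0,0,i_h t)$ to obtain $\ta(u-\tu_h,i_h t)=-s^*(r_h,i_h t)$, and bounds the remainder by \eqref{eq:tri_ta} together with \eqref{interpolation}. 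The only cosmetic difference is the order of presentation (the paper treats the flux first and the $H^1$-bound second), and your regularity remarks about $u-\tu_h$ needed for \eqref{eq:tri_ta} are exactly the ones the paper tacitly uses.
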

  \begin{proof}
    By definition
    \begin{align}
      &h^2\norm{\p_\nu (u-\tu_h)}_{H^{-1/2}(\p\Omega)}=\sup_{t\in H^{1/2}(\p\Omega)}\frac{h^2(\p_\nu (u-\tu_h),t)_{L^2(\p\Omega)}}{\norm{t}_{H^{1/2}(\p\Omega)}}\\
      &\lesssim\sup_{t\in H^{1}(\Omega)}\frac{h^2(\p_\nu (u-\tu_h),t)_{L^2(\p\Omega)}}{\norm{t}_{H^{t}(\p\Omega)}}=\sup_{t\in H^{1}(\Omega)}\frac{\ta(u-\tu_h,t)-(h\nabla(u-\tu_h),h\nabla t)_{L^2(\Omega)}}{\norm{t}_{H^{1}(\p\Omega)}}\\
      &\lesssim \sup_{t\in H^{1}(\Omega)}\frac{\ta(u-\tu_h,t)}{\norm{t}_{H^{1}(\p\Omega)}}+h^2\norm{u-\tu_h}_{H^1(\Omega)}.
    \end{align}
    Taking $(v,w,t)=(0,0,i_h t)$ in \eqref{tri_Galerkin_orthogonality}, we have
    \begin{equation}
      \tg(\tu_h-u,\tz_h,r_h,0,0,i_h t)=\ta(u_h-u,i_h t)-s^*(r_h,i_h t)=0.
    \end{equation}
    By \eqref{eq:tri_ta}, there holds
    \begin{align}
      \ta(u-\tu_h,t)&=\ta(u-\tu_h,t-i_h t)-s^*(r_h,i_h t)\\
      &\lesssim S(u-\tu_h)^{1/2}\norm{t-i_h t}_{H^1(\T_h)}+S^*(r_h)^{1/2}S^*(i_h t)^{1/2}\\
      &\lesssim S(u-\tu_h)^{1/2}[t]_{H^1(\T_h)}+S^*(r_h)^{1/2}h\left(\norm{i_h t-t}_{H^1(\Omega)}+\norm{t}_{H^1(\Omega)}\right)\\
      &\lesssim h \left(J^{1/2}(\tu_h)+[h^2\Delta \tu_h+h^2 f]_{H^0(\T_h)}+S^*(r_h)^{1/2}\right)\norm{t}_{H^1(\Omega)}.
    \end{align}
    Therefore, we have
    \begin{equation}
      \begin{split}
        h^2\norm{\p_\nu (u-\tu_h)}_{H^{-1/2}(\p\Omega)}&\lesssim h J(\tu_h)^{1/2}+h[h^2\Delta \tu_h+h^2 f]_{H^0(\T_h)}\\
        &+h^2\norm{r_h}_{H^1(\Omega)}+h^2\norm{u-\tu_h}_{H^1(\Omega)}
      \end{split}
    \end{equation}
    Applying Theorem \ref{lipschitz_stability} and \eqref{eq:tri_laplacian}, there holds
    \begin{align}
      &h\norm{u-\tu_h}_{H^1(\Omega)}\lesssim h\norm{\tu_h-q}_{L^2(\omega)}+h\norm{L^q_{u-\tu_h}}_{(H^1(\Omega))^*}\\
      & \lesssim h\norm{\tu_h-q}_{L^2(\omega)}+h^{3/2}\norm{Q\p_\nu \tu_h-\beta}_{L^2(\p\Omega)}+J(\tu_h)^{1/2}+[h^2\Delta \tu_h+h^2 f]_{H^0(\T_h)}+h\norm{\tz_h}_{H^1(\Omega)},
    \end{align}
    which leads to \eqref{eq:posteriori_bd}.
  \end{proof}\par
  Observe that $\trinorm{u-\tu_h,\tz_h}+S^*(r_h)^{1/2}$ is essentially the same as the right-hand side of \eqref{eq:posteriori_bd}. Similarly to Section \ref{sec:FEM_error}, we derive the following best approximation for it.
      \begin{lemma}
        \label{lm:flux_best_approximation}
        Suppose that $u\in H^{1}(\Omega)$ and $(u_h,z_h,r_h)\in V_h\times V_h\times V_h$ solve \eqref{question} and \eqref{equation_g}, respectively. Then there holds
        \begin{equation}
          \label{eq:tri_best_approximation}
          \trinorm{u-\tu_h,\tz_h}+S^*(r_h)^{1/2}\lesssim \inf_{\tilde{u}\in V_h}(\trinorm{u-\tilde{u},0}+\norm{h\nabla (u-\tilde{u})}_{L^2(\Omega)}+\norm{h\p_\nu(u-\tilde{u})}_{V_h^{-1/2}(\p\Omega)}).
        \end{equation}
      \end{lemma}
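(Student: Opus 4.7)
The plan is to mirror the proof of Lemma \ref{lm:best_approximation}, but now in the extended triple system. First, for any $\tilde u\in V_h$ I will split
\[
\trinorm{u-\tu_h,\tz_h}+S^*(r_h)^{1/2}\leq \trinorm{\tu_h-\tilde u,\tz_h}+S^*(r_h)^{1/2}+\trinorm{\tilde u-u,0}
\]
so that the last term is already of the desired form, and it remains to control the first two terms. Apply the inf--sup estimate \eqref{eq:tri_uniqueness} to bound
\[
\trinorm{\tu_h-\tilde u,\tz_h}+S^*(r_h)^{1/2}\lesssim \sup_{(v,w,t)\in V_h^3}\frac{\tg(\tu_h-\tilde u,\tz_h,r_h,v,w,t)}{\trinorm{v,w}+S^*(t)^{1/2}},
\]
and then use the Galerkin orthogonality \eqref{tri_Galerkin_orthogonality} to rewrite the numerator as $\tg(u-\tilde u,0,0,v,w,t)$. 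By the definition \eqref{eq:def_tg} this reduces to
\[
\tg(u-\tilde u,0,0,v,w,t)=g(u-\tilde u,0,v,w)+\ta(u-\tilde u,t),
\]
since the terms involving $z=0$ and $r=0$ vanish.

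For the first summand, the bound \eqref{eq:best_approximation_g} from the proof of Lemma \ref{lm:best_approximation} gives directly
\[
g(u-\tilde u,0,v,w)\lesssim\bigl(\trinorm{u-\tilde u,0}+\norm{h\nabla(u-\tilde u)}_{L^2(\Omega)}+\norm{h\p_\nu(u-\tilde u)}_{V_h^{-1/2}(\p\Omega)}\bigr)\trinorm{v,w},
\]
where I note that $P$ can be dropped in front of $\p_\nu(u-\tilde u)$ since $P$ is a projection, just as in \eqref{eq:best_approximation_g}. For the new summand $\ta(u-\tilde u,t)$ I will not integrate by parts; instead I will bound the two pieces in its definition directly by Cauchy--Schwarz. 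The interior term gives $\norm{h\nabla(u-\tilde u)}_{L^2(\Omega)}\,h\norm{t}_{H^1(\Omega)}$, and for the boundary term I use that $t\in V_h$ together with the definition of $\norm\cdot_{V_h^{-1/2}(\p\Omega)}$ to obtain $\norm{h\p_\nu(u-\tilde u)}_{V_h^{-1/2}(\p\Omega)}\,h\norm{t}_{H^1(\Omega)}$. Since $h\norm{t}_{H^1(\Omega)}=S^*(t)^{1/2}$ by definition of $S^*$, this yields
\[
\ta(u-\tilde u,t)\lesssim\bigl(\norm{h\nabla(u-\tilde u)}_{L^2(\Omega)}+\norm{h\p_\nu(u-\tilde u)}_{V_h^{-1/2}(\p\Omega)}\bigr)S^*(t)^{1/2}.
\]

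Adding the two estimates, dividing by $\trinorm{v,w}+S^*(t)^{1/2}$, taking the supremum over $(v,w,t)\in V_h^3$, and finally the infimum over $\tilde u\in V_h$ gives \eqref{eq:tri_best_approximation}. The key point, and the only genuinely new piece compared to Lemma \ref{lm:best_approximation}, is handling $\ta$: because $\ta$ carries the \emph{full} normal derivative on $\p\Omega$ (not just its $Q$-part), one cannot absorb the boundary contribution into $B(\cdot)$, and the $V_h^{-1/2}(\p\Omega)$ norm of $h\p_\nu(u-\tilde u)$ must appear on the right-hand side. This is also what makes $\tilde a$ compatible with the $S^*$-norm in the denominator, which is essential for the bound to close.
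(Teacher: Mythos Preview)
Your proof is correct and follows essentially the same route as the paper: the triangle-inequality split, the inf--sup bound \eqref{eq:tri_uniqueness} combined with Galerkin orthogonality \eqref{tri_Galerkin_orthogonality}, the reduction of $\tg(u-\tilde u,0,0,v,w,t)$ to $g(u-\tilde u,0,v,w)+\ta(u-\tilde u,t)$, the reuse of \eqref{eq:best_approximation_g} for the $g$-part, and the direct Cauchy--Schwarz bound on the two pieces of $\ta$ against $S^*(t)^{1/2}$ are exactly the steps the paper carries out.
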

      \begin{proof}
        For any $\tu\in V_h$, applying \eqref{eq:def_tg}, \eqref{eq:tri_uniqueness} and \eqref{tri_Galerkin_orthogonality}, there holds
        \begin{align}
          \trinorm{u-\tu_h,\tz_h}&+S^*(r_h)^{1/2}\leq \trinorm{\tu_h-\tu,\tz_h}+S^*(r_h)^{1/2}+\trinorm{\tu-u,0}\\
          &\lesssim \sup_{(v,w,t)\in V_h\times V_h\times V_h}\frac{\tg(\tu_h-\tu,\tz_h,r_h,v,w,t)}{\trinorm{v,w}+S^*(t)^{1/2}}+\trinorm{\tu-u,0}\\
          &\lesssim \sup_{(v,w,t)\in V_h\times V_h\times V_h}\frac{\tg(u-\tu,0,0,v,w,t)}{\trinorm{v,w}+S^*(t)^{1/2}}+\trinorm{\tu-u,0}\\
          &\lesssim \sup_{(v,w)\in V_h\times V_h}\frac{g(u-\tu,0,v,w)}{\trinorm{v,w}}+\sup_{t\in V_h}\frac{\ta(u-\tu,t)}{S^*(t)^{1/2}}+\trinorm{\tu-u,0}.
        \end{align}
        We have already shown in \eqref{eq:best_approximation_g} that for all $\tu\in V_h$ and $(v,w)\in V_h\times V_h$, there holds  
        \begin{align}
          \frac{g(u-\tu,0,v,w)}{\trinorm{v,w}}\lesssim\trinorm{u-\tilde{u},0}+\norm{h\nabla (u-\tilde{u})}_{L^2(\Omega)}+\norm{h\p_\nu(u-\tilde{u})}_{V_h^{-1/2}(\p\Omega)}.
        \end{align}
        Finally, for $t\in V_h$, we have
        \begin{align}
          &\ta(u-\tilde{u},t)=(h\nabla(u-\tilde{u}),h\nabla t)_{L^2(\Omega)}-h^2(P\p_\nu(u-\tilde{u}),t)_{L^2(\p\Omega)}\\
          &\lesssim \norm{h\nabla (u-\tu)}_{L^2(\Omega)}\cdot h\norm{t}_{H^1(\Omega)}+h\norm{\p_\nu(u-\tilde{u})}_{V_h^{-1/2}(\p\Omega)}\cdot h\norm{t}_{H^1(\Omega)}\\
          &\lesssim \left(\norm{h\nabla (u-\tu)}_{L^2(\Omega)}+\norm{h\p_\nu(u-\tilde{u})}_{V_h^{-1/2}(\p\Omega)}\right)S^*(t)^{1/2}.
        \end{align}
      \end{proof}
      Observe that the right-hand side of \eqref{eq:tri_best_approximation} coincides with the right-hand side of \eqref{best_approximation}, then the corollary below follows from the same arguments in Section \ref{sec:FEM_error}.
      \begin{corollary}
        \label{flux_optimal_rate}
        Suppose $u\in H^{m}(\Omega)$ and $(\tu_h,\tz_h,r_h)\in V_h\times V_h\times V_h$ solves \eqref{question} and \eqref{equation_tg}, respectively. Here $m\geq 2$ and the polynomial order $k$ of the finite element space $V_h$ satisfies $k\geq m-1$. Then there holds
        \begin{equation}
          \norm{u-\tu_h}_{H^1(\Omega)}+\norm{\p_\nu (u-\tu_h)}_{H^{-1/2}(\p\Omega)}\lesssim h^{m-1}\norm{D^{m}u}_{L^2(\Omega)}.
        \end{equation}  
      \end{corollary}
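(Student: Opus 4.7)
The plan is to mirror exactly the chain of estimates that produced Corollary \ref{optimal_rate}, but now driven by the flux-augmented a posteriori bound of Theorem \ref{flux_posteriori} and the corresponding best approximation result of Lemma \ref{lm:flux_best_approximation}. The left-hand side of Theorem \ref{flux_posteriori} already contains both $h\norm{u-\tu_h}_{H^1(\Omega)}$ and $h\norm{\p_\nu(u-\tu_h)}_{H^{-1/2}(\p\Omega)}$, so no new functional-analytic work is needed; everything reduces to interpolation estimates that were already established for Corollary \ref{optimal_rate}.

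First, I would observe that the right-hand side of \eqref{eq:posteriori_bd} is, up to equivalence of constants, nothing other than $\trinorm{u-\tu_h,\tz_h}+S^*(r_h)^{1/2}$. Indeed, using $u|_\omega=q$, $Q\p_\nu u=\beta$, $-\Delta u=f$ and $J(u)=0$ one rewrites every term on the right of \eqref{eq:posteriori_bd} as one of $h\norm{u-\tu_h}_{L^2(\omega)}$, $B(u-\tu_h)^{1/2}$, $S(u-\tu_h)^{1/2}$, $S^*(\tz_h)^{1/2}$, $S^*(r_h)^{1/2}$. Combining Theorem \ref{flux_posteriori} with this observation and Lemma \ref{lm:flux_best_approximation} gives the a priori bound
\begin{equation*}
h\norm{u-\tu_h}_{H^1(\Omega)}+h\norm{\p_\nu(u-\tu_h)}_{H^{-1/2}(\p\Omega)}\lesssim \inf_{\tilde u\in V_h}\Bigl(\trinorm{u-\tilde u,0}+\norm{h\nabla(u-\tilde u)}_{L^2(\Omega)}+\norm{h\p_\nu(u-\tilde u)}_{V_h^{-1/2}(\p\Omega)}\Bigr),
\end{equation*}
which is the flux-enhanced analogue of Theorem \ref{priori_estimate}.

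Next, I would test the infimum with the interpolant $\tilde u=\pi_h u$ from Section \ref{sec:FEM_error}. Proposition \ref{trinorm} controls the first term, Proposition \ref{nabla_pi} the second, and Proposition \ref{h_partial} the third, each by $[u]_{H^m(\T_h)}$. Since by definition $[u]_{H^m(\T_h)}=\bigl(\sum_{K\in\T_h}\norm{(hD)^m u}_{L^2(K)}^2\bigr)^{1/2}=h^m\norm{D^m u}_{L^2(\Omega)}$ whenever $u\in H^m(\Omega)$ (and $k\geq m-1$ guarantees the interpolation hypotheses are satisfied), the right-hand side is bounded by $h^m\norm{D^m u}_{L^2(\Omega)}$. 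Dividing by $h$ yields the claim.

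I do not anticipate any genuine obstacle: the only thing that requires care is verifying that the rewriting in the first step really produces $\trinorm{u-\tu_h,\tz_h}+S^*(r_h)^{1/2}$ on the nose (this is the same bookkeeping that justified the second inequality in Theorem \ref{thm:aposterori}), and that the hypotheses of Propositions \ref{h_partial}, \ref{nabla_pi}, \ref{trinorm} are truly inherited by $\tu_h$, $\tz_h$, $r_h$; both are routine consequences of the consistency and Galerkin orthogonality \eqref{tri_Galerkin_orthogonality} established for the augmented system.
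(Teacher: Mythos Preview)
Your proposal is correct and follows exactly the route the paper takes: combine Theorem \ref{flux_posteriori} with the observation (stated in the paper just before Lemma \ref{lm:flux_best_approximation}) that its right-hand side coincides with $\trinorm{u-\tu_h,\tz_h}+S^*(r_h)^{1/2}$, apply Lemma \ref{lm:flux_best_approximation}, and then invoke Propositions \ref{h_partial}, \ref{nabla_pi}, \ref{trinorm} with $\tilde u=\pi_h u$ just as in the proof of Corollary \ref{optimal_rate}. The only quibble is your final parenthetical: Propositions \ref{h_partial}--\ref{trinorm} concern only $u$ and $\pi_h u$, so there is nothing to ``inherit'' for $\tu_h,\tz_h,r_h$.
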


\section{Numerical results}

In order to simplify the implementation of the finite element method, in the numerical examples below, we take $\Omega = [0,1]^2$ and $\omega = [0.1,0.9]\times[0.25, 0.75]$. We denote $\Gamma \in \p\Omega$ the top edge of the unit square, that is
  \begin{equation}
    \Gamma =\{(x,y)\mid 0<x<1\text{ and }y=1\},
  \end{equation}
  and let $\mathcal{V}_N$ be the subspace of $L^2(\p\Omega)$ which is spanned by the basis $\{\phi_n\},\ 1\leq n\leq N$, where $\phi_n$ is $\sqrt{2}\cos(n\pi x)$ on $\Gamma$ and vanishes on $\p\Omega\backslash\Gamma$. Furthermore, the polynomial order of the finite element space $V_h$ is one.\par
  Our first computational example studies the effect of the stabilizing term $s$ in the computations. Without making any theoretical difference, we rescale $s$ by a positive constant $\gamma>0$.
  \begin{figure}[h!tbp]
    \centering
    %\captionsetup[subfigure]{justification=centering}
    \begin{subfigure}{0.45\textwidth}
     \centering
    \includegraphics[width=\textwidth]{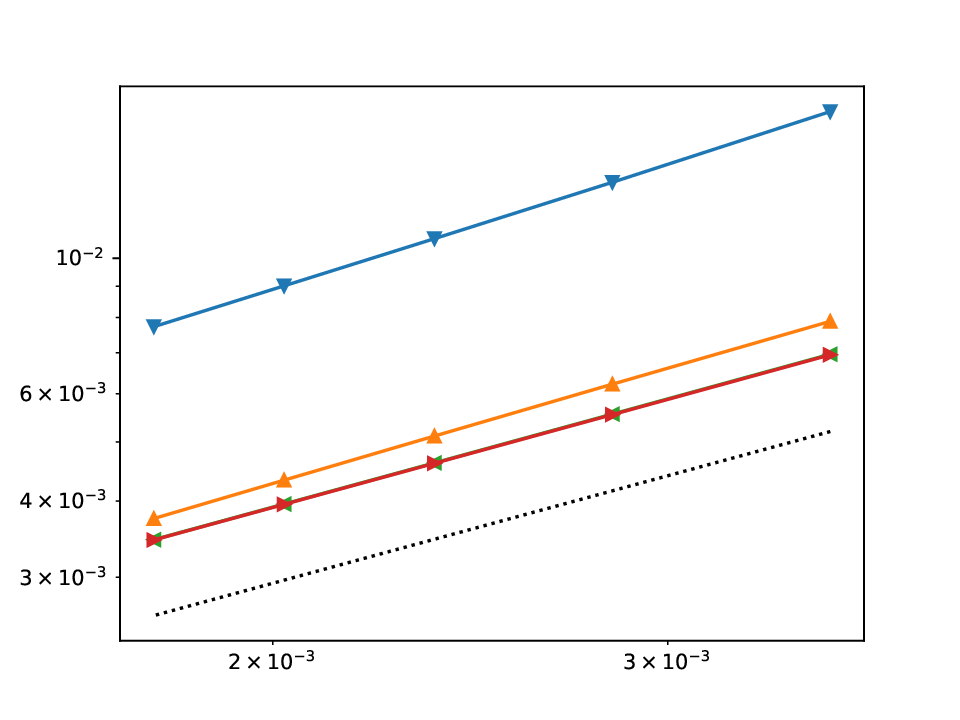}
    \subcaption{}
    \label{gamma}
    \end{subfigure}
    \begin{subfigure}{0.45\textwidth}
     \centering
    \includegraphics[width=\textwidth]{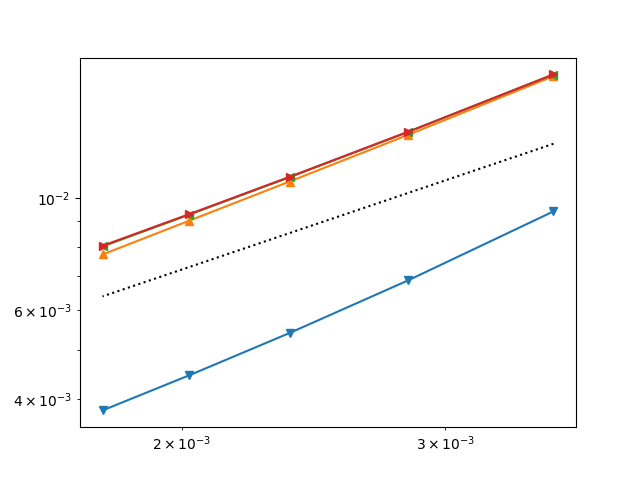}
    \subcaption{}
    \label{dimension}
    \end{subfigure}
    \caption{The error $\norm{u-u_h}_{H^1}$ as a function of mesh size $h$ with the reference rate $h$ presented by the dash line.  (a) $u_h$ is computed with parameter $\gamma = 1,\ 10^{-1},\ 10^{-2},\ 0$ with triangles pointing down, up, left, and right, respectively. (b) $u_h$ is obtained with different $\mathcal{V}_N$ where $N = 1,\ 8,\ 16,\ 64$ with triangles pointing down, up, left, and square, respectively.}
  \end{figure}
  Choosing $N = 8$ for space $\mathcal{V}_N$, and the $H^1(\Omega)$ error for the artificial solution 
  \begin{equation}
    \label{example_1}
    u(x,y) = (e^y-y)\cos(\pi x)
  \end{equation}
  as a function of the mesh parameter $h$ is illustrated for different values of $\gamma$ in Figure \ref{gamma}. It shows that the numerical results improve as $\gamma\to 0$, which implies that in this example the stabilizing term $s$ can be omitted as the method still converge with the optimal rate without it.\par
  Next, we study the effect of the dimension $N$ of $\mathcal{V}_N$ in the computations, the $H^1(\Omega)$ for $u(x,y)$ in \eqref{example_1} as a function of the mesh size $h$ is shown for different $N$ in Figure \ref{dimension}. It appears that when $N$ is large enough, merely increasing $N$ can not affect the results.\par
  \begin{figure}[h!tbp]
  \centering
    \includegraphics[width=0.5\linewidth]{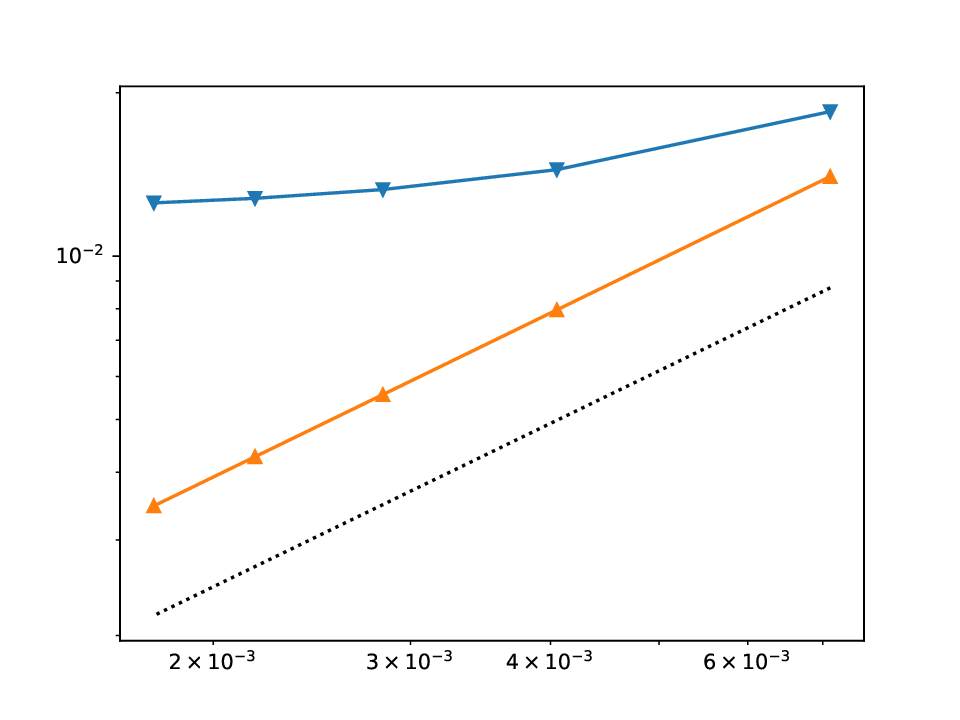}
    \caption{The error $\norm{u-u_h}_{H^1}$ as a function of mesh size $h$. Here $N=1,2$ with triangles pointing down and up. Reference rate $h$ is presented by the dash line.}
    \label{perturbation}
  \end{figure}
  Our second example illustrates the error estimate with perturbations. Taking 
  \begin{equation}
    \tilde{u}(x,y)=(e^y-y)\cos(\pi x)+0.025(e^y-y)\cos(2\pi x),
  \end{equation}
  Figure \ref{perturbation} shows the stagnation of the convergence for $N=1$ but not for $N=2$ because $\p_\nu \tilde{u}|_{\p\Omega}\in \mathcal{V}_2$, which is compatible with our theory.\par

  Then we study the effect of random perturbations of data. To this end, we denote the right-hand side of the equation \eqref{equation_g} by $F(v,w)$. Instead of solving \eqref{equation_g}, we numerically solve 
  \begin{equation}
      g(u,z,v,w)=F(v,w)+\varepsilon \delta \frac{\abs{F(v,w)}}{\abs{\delta}},
  \end{equation}
  where the random variable $\delta \sim \mathcal{N}(0,1)$ and $\mathcal{N}(0,1)$ represents the standard normal distribution. Figure \ref{noise} shows the stagnation of the convergence for $\varepsilon\neq 0$ compared with the consistent convergence without white noise.

\begin{figure}[h!tbp]
  \centering
    \includegraphics[width=0.5\linewidth]{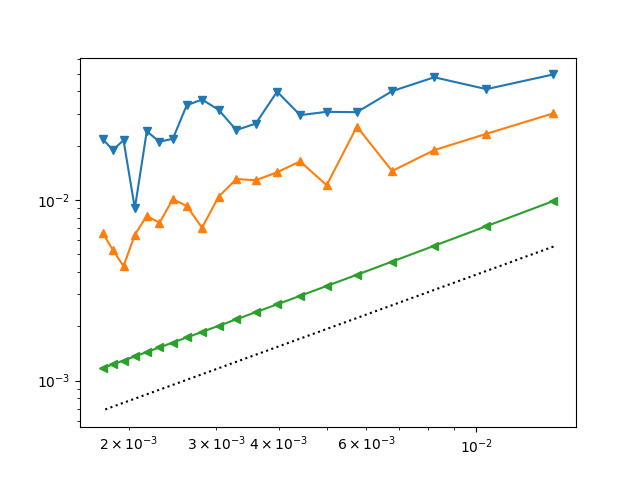}
    \caption{The error $\norm{u-u_h}_{H^1}$ as a function of mesh size $h$. Here $\varepsilon= 0.12, 0.06,0$ with triangles pointing down, up and left. Reference rate $h$ is presented by the dash line.}
    \label{noise}
  \end{figure}

  To close this section, we define the ratio
  \begin{equation}
    C(u) = \frac{\norm{u-u_h}_{H^1(\Omega)}}{h\norm{u}_{H^2(\Omega)}},
  \end{equation}
  and consider the functions
  \begin{equation}
    \label{example_2}
    u_N(x,y) = (e^y-y)\cos(N\pi x),\ N=1,\ 2,\ 3,\ 4.
  \end{equation}
  \begin{figure}[h!tbp]
    \centering
    %\captionsetup[subfigure]{justification=centering}
    \begin{subfigure}{0.45\textwidth}
     \centering
    \includegraphics[width=\textwidth]{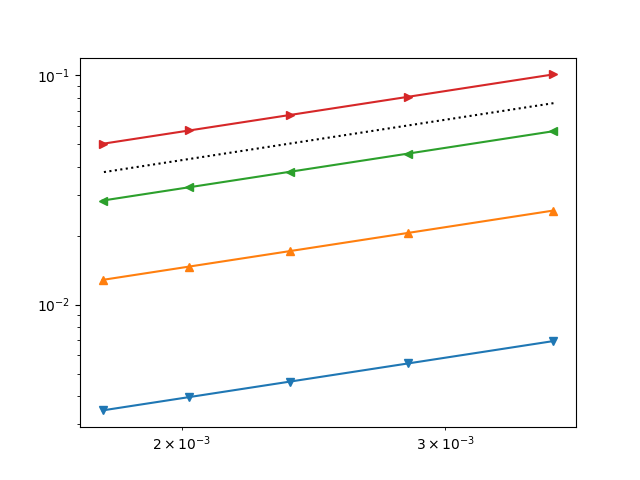}
    \subcaption{}
    \label{function_difference}
    \end{subfigure}
    \begin{subfigure}{0.45\textwidth}
     \centering
    \includegraphics[width=\textwidth]{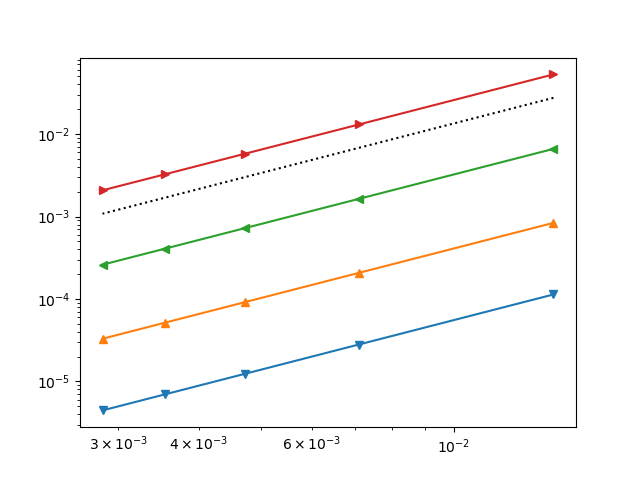}
    \subcaption{}
    \label{second_order}
    \end{subfigure}
    \caption{(a) The error $\norm{u_N-u_{Nh}}_{H^1(\Omega)}$ as a function of mesh size $h$ where $u_{Nh}$ is computed by first-order method. Here $N = 1,\ 2,\ 3,\ 4$ with the triangles pointing down, up, left and right, respectively. Reference rate $h$ in dashed. (b) The error $\norm{u_N-u_{Nh}}_{H^1(\Omega)}$ as a function of mesh size $h^2$ where $u_{Nh}$ is computed by second-order method. Here $N = 1,\ 2,\ 3,\ 4$ with the triangles pointing down, up, left and right, respectively. Reference rate $h^2$ in dashed.}
  \end{figure}
  We use both the first-order and second-order finite element methods to compute $u_{Nh}$.
  The $H^1(\Omega)$ errors between $u_N$ and $u_{Nh}$ with first-order and second-order methods are plotted in Figure \ref{function_difference} and \ref{second_order}, respectively. \par
  Besides, the ratio $C(u)$ for $u$ as in \eqref{example_1}, and for $u_N$ is shown in Figure \ref{ratio} as a function of $N$. For any $u$, the ratio $C(u)$ gives a lower bound for the constant $C>0$ in our priori estimate. It appears that the constant grows as a function of $N$, as expected.
\begin{figure}[h!tbp]
  \centering
    \includegraphics[width=0.5\linewidth]{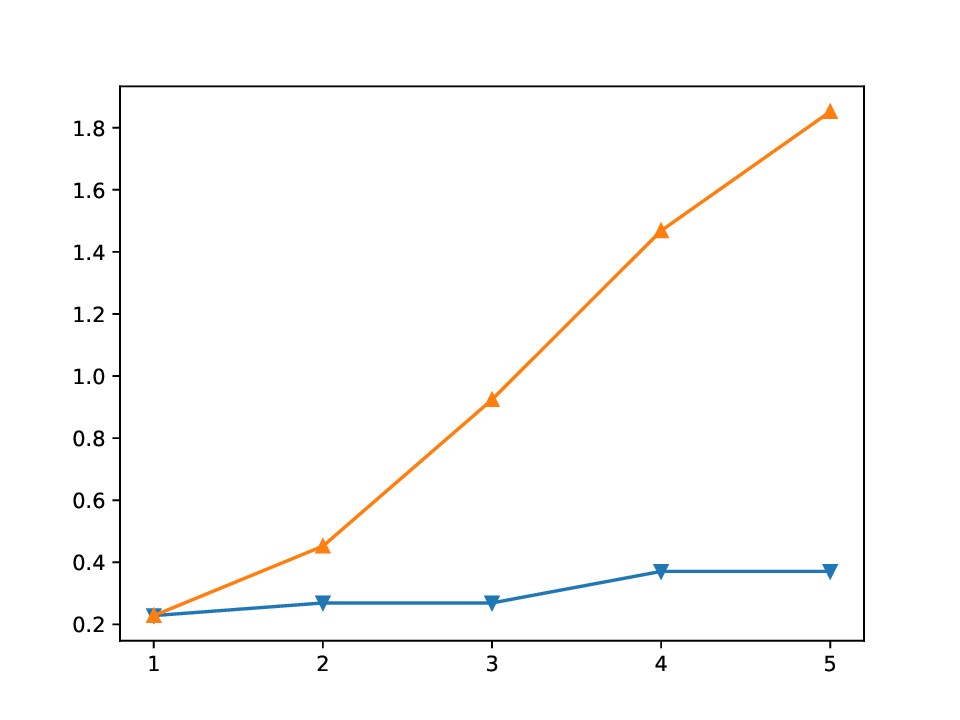}
    \caption{The ratio $C(u)$ as a function of $N$. Here $u$ is as in \eqref{example_1} and \eqref{example_2} with triangles pointing down and up, respectively.}
    \label{ratio}
  \end{figure}

\clearpage

\bibliographystyle{siamplain}
\bibliography{references}
\end{document}